\documentclass{amsart}
\oddsidemargin 0mm
\evensidemargin 0mm
\topmargin 0mm
\textwidth 160mm
\textheight 230mm
\tolerance=9999
\usepackage{amssymb,amstext,amsmath,amscd,amsthm,amsfonts,enumerate,graphicx,latexsym,stmaryrd,multicol}


\usepackage[usenames]{color}
\usepackage[all]{xy}
\newtheorem{thm}{Theorem}[section]
\newtheorem{lem}[thm]{Lemma}
\newtheorem{prop}[thm]{Proposition}
\newtheorem{cor}[thm]{Corollary}
\theoremstyle{definition}
\newtheorem{dfn}[thm]{Definition}

\newtheorem{conj}[thm]{Conjecture}

\newtheorem{rmk}[thm]{Remark}
\theoremstyle{remark}

\newtheorem*{ac}{Acknowlegments}
\newtheorem*{conv}{Convention}

\numberwithin{equation}{thm}

\def\depth{\operatorname{depth}}
\def\Ext{\operatorname{Ext}}

\def\ge{\geqslant}

\def\Hom{\operatorname{Hom}}
\def\image{\operatorname{Im}}

\def\m{\mathfrak{m}}
\def\mod{\operatorname{mod}}
\def\Mod{\operatorname{Mod}}

\def\n{\mathfrak{n}}
\def\nf{\mathrm{NF}}

\def\p{\mathfrak{p}}
\def\pd{\operatorname{pd}}

\def\spec{\operatorname{Spec}}
\def\supp{\operatorname{Supp}}

\def\Tor{\operatorname{Tor}}
\def\tr{\operatorname{Tr}}

\def\m{\mathfrak{m}}
\def\C{\mathcal{C}}
\def\D{\mathcal{D}}
\def\X{\mathcal{X}}
\def\Y{\mathcal{Y}}

\def\B{\mathcal{B}}
\def\A{\mathcal{A}}
\def\res{\operatorname{res}}
\def\inf{\operatorname{inf}}

\def\radius{\operatorname{radius}}
\def\id{\operatorname{id}}
\def\add{\operatorname{add}}
\def\l{\operatorname{length}}
\def\id{\operatorname{id}}
\def\dim{\operatorname{dim}}
\def\Ann{\operatorname{Ann}}
\def\tG{\operatorname{G}}
\def\seek{\operatorname{seek}}
\begin{document}
\allowdisplaybreaks
\title{On the finiteness of radii of resolving subcategories}
\author{Yuki Mifune}
\address{Graduate School of Mathematics, Nagoya University, Furocho, Chikusaku, Nagoya 464-8602, Japan}
\email{yuki.mifune.c9@math.nagoya-u.ac.jp}
\thanks{2020 {\em Mathematics Subject Classification.} 13C60, 13D05, 13H10}
\thanks{{\em Key words and phrases.} radius, resolving subcategory, Cohen--Macaulay ring, injective dimension, semidualizing module, Bass class, Auslander class}
\begin{abstract}
Let $R$ be a commutative noetherian ring. Denote by $\mod R$ the category of finitely generated $R$-modules. In this paper, we investigate the finiteness of the radii of resolving subcategories of $\mod R$ with respect to a fixed semidualizing module. As an application, we give a partial positive answer to a conjecture of Dao and Takahashi: we prove that for a Cohen--Macaulay local ring $R$, a resolving subcategory of $\mod R$ has infinite radius whenever it contains a canonical module and a non-MCM module of finite injective dimension.
\end{abstract}
\maketitle
\section{Introduction}
Let $R$ be a commutative noetherian ring. Denote by $\mod R$ the category of finitely generated $R$-modules. The concept of the radius of a full subcategory of $\mod R$ has been introduced by Dao and Takahashi \cite{Dao Takahashi}. It has been linked to many well-studied notions such as the dimension of the stable category of maximal Cohen--Macaulay modules, finite/countable Cohen--Macaulay representation type and the uniform Auslander condition. Among other things, Dao and Takahashi proved the following result, which forms an essential part of the main result of their paper.

\begin{thm}[Dao and Takahashi] \label{4.9}
Let $R$ be a local ring. Let $\X$ be a resolving subcategory of $\mod R$. If $\X$ contains a module $M$ such that $0<\pd_R M <\infty$, then $\X$ has infinite radius. 
\end{thm}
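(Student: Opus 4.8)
The plan is to argue by contradiction: suppose $\X$ has finite radius, so there exist $G\in\mod R$ and an integer $n$ with $\X\subseteq|G|_{n}$, the $n$-th ball around $G$. Since a resolving subcategory is closed under syzygies, $\syz^{\pd_R M-1}M$ again lies in $\X$ and has projective dimension one, so I may assume $\pd_R M=1$. Let $\Y:=\res_R M$ be the smallest resolving subcategory containing $M$; then $\Y\subseteq\X$ and $\radius\Y\le\radius\X$, so it is enough to show that $\res_R M$ has infinite radius for every non-free $M$ with $\pd_R M=1$. For such $M$, every module of $\res_R M$ has projective dimension at most one, $\res_R M$ is the summand-closure of the category of modules carrying a finite filtration with subquotients in $\add(R\oplus M)$, and every syzygy occurring in $\res_R M$ is free. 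Finally, localizing at a prime $\p$ minimal in the (non-empty, closed) non-free locus of $M$, and using that the ball construction commutes with localization so that the radius does not grow, I may also assume $M$ is free on the punctured spectrum of $R$; then $\depth R\ge1$, and if $\operatorname{rank}M=0$ then $M$ has finite length.

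The heart of the argument is a numerical complexity that is bounded on every ball yet unbounded on $\res_R M$. For a module $N$, let $\ell\ell(N)$ be the Loewy length of $H^{0}_{\m}(N)$, the largest finite-length submodule of $N$; this is always a finite integer. It vanishes on free modules and on syzygies (a submodule of a free module over a ring of positive depth has trivial $H^{0}_{\m}$), it is the maximum over direct summands — hence insensitive to the direct-sum closure inside a ball — and it satisfies $\ell\ell(E)\le\ell\ell(X)+\ell\ell(Y)$ for any short exact sequence $0\to X\to E\to Y\to 0$; an induction on $n$ then bounds $\ell\ell$ on $|G|_{n}$ by a constant depending only on $G$ and $n$.

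To contradict this, I must make $\ell\ell$ unbounded on $\res_R M$. If $\operatorname{rank}M=0$ then $M$ is non-free of finite length with $\pd_R M=1$, so $\depth R=1$ by Auslander--Buchsbaum and a minimal presentation has the square form $0\to R^{a}\xrightarrow{A}R^{a}\to M\to 0$ with entries of $A$ in $\m$. From $A^{k}=A^{k-1}A$ and the injectivity of $A$ one gets short exact sequences $0\to\cok(A^{k-1})\to\cok(A^{k})\to M\to 0$, so each $\cok(A^{k})$ lies in $\res_R M$ and has finite length; since $\depth R=1$ forces $\m^{k-1}\ne\m^{k}$, the surjection $\cok(A^{k})\twoheadrightarrow R^{a}/\m^{k}R^{a}$ yields $\ell\ell(\cok(A^{k}))\ge k$, contradicting $\res_R M\subseteq|G|_{n}$. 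If instead $\operatorname{rank}M>0$, then $\res_R M$ may contain no non-zero torsion module (for instance $M=\m$ over $k[[x,y]]$), so $\ell\ell$ is useless; one then replaces it by a Castelnuovo--Mumford-type regularity, using that $\res_R M$ absorbs the syzygies of arbitrary finite-length modules (when $M$ is a syzygy this comes from running the horseshoe lemma along a composition series), that $\syz(R/\m^{j})$ has regularity growing with $j$, and that regularity is again a maximum over summands, rises by at most one under a syzygy, and does not increase under extensions, hence is bounded on each ball. Either way $\res_R M$, and therefore $\X$, has infinite radius.

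The main obstacle is the unbounded-complexity step. Boundedness on every ball forces the invariant to be maximum-type rather than additive (the direct-sum closure kills any length-type quantity), while it must still genuinely grow inside $\res_R M$; and the positive-rank case really does need a regularity-type invariant, whose behaviour under syzygies, extensions, and — over a non-graded local ring — passage to an associated graded ring or a general Artinian reduction must be handled carefully, together with the check that $\res_R M$ contains modules of unbounded regularity. By contrast the rank-zero case is soft. One could instead avoid an explicit invariant and derive the lower bound on the radius from a ghost-lemma argument, but then the work moves into exhibiting, for each candidate generator, arbitrarily long non-trivial composites of ghost maps between modules of $\res_R M$.
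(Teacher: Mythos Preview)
Your proposal has a genuine gap: the positive-rank case is not proved. The rank-zero argument is sound --- the Loewy length of the maximal finite-length submodule is indeed bounded on every ball (it is subadditive in short exact sequences, a maximum over direct summands, and vanishes on syzygies once $\depth R>0$), and the modules $\cok(A^k)$ witness its unboundedness on $\res_R M$. But the positive-rank case, which is unavoidable after your reductions (already $M=\m$ over $k[[x,y]]$ falls into it), is handled only by an allusion to a ``Castelnuovo--Mumford-type regularity'': you neither define this invariant over a non-graded local ring nor verify the properties you need (boundedness on balls, controlled behaviour under syzygies and extensions, and unboundedness on an explicit family inside $\res_R M$), and your own final paragraph flags exactly this as the unresolved obstacle. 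The auxiliary claim that $\res_R M$ equals the summand closure of iterated extensions of $\add(R\oplus M)$ is also unjustified --- closure under kernels of epimorphisms may add more --- though you do not actually rely on equality.

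The paper avoids the rank dichotomy altogether. It recovers the statement as the case $C=R$ of Theorem~\ref{thm3.1}: after reducing to $\pd_R M=1$, one replaces $M$ by a module in $\res_R M$ with non-free locus $\{\m\}$ and then with $\Ext^1_R(-,R)\cong k$ (Propositions~\ref{prop3.3} and~\ref{prop3.5}); from this one deduces $\tr_R(R/\m^i)\in\res_R M$ for every $i>0$ (Proposition~\ref{prop3.6}). The contradiction then comes from an annihilator invariant rather than a length or regularity one: pass to $\widehat R$, choose a Gorenstein surjection $S\twoheadrightarrow\widehat R$ with $\dim S=\dim\widehat R$, and use that on any ball $[G]_{r+1}$ the ideal $\bigcap_{j>0}\Ann_S\Ext^j_S(-,S)$ contains a fixed ideal depending only on $G$ and $r$ (\cite[Lemma~4.7]{Dao Takahashi}). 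Running this over the family $\tr_R(R/\m^i)$ forces that fixed ideal into $\Ann_S\Hom_S(\widehat R,S)$, and a support computation at a minimal prime of $S$ over the kernel of $S\to\widehat R$ shows this is impossible (Proposition~\ref{prop3.7}). No rank split, and no regularity invariant, is needed.
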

Here, the notion of a resolving subcategory has been introduced by Auslander and Bridger \cite{AB} in the 1960s, and has been studied widely and deeply so far; see \cite{APST, AR, KS, stcm, arg} for instance. In view of this theorem, Dao and Takahashi presented the following conjecture.

\begin{conj}[Dao and Takahashi] \label{3.1}
Let $R$ be a Cohen--Macaulay local ring. Let $\X$ be a resolving subcategory of $\mod R$. If $\X$ contains a non-MCM module $M$, then $\X$ has infinite radius.
\end{conj}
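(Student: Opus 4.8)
The plan is as follows. The conjecture in full generality appears out of reach with current techniques, so I will target the special case featured in the abstract: a resolving subcategory $\X$ of $\mod R$ has infinite radius whenever it contains a canonical module $\omega$ of $R$ together with a non-MCM module $M$ of finite injective dimension. The guiding idea is that, over a Cohen--Macaulay local ring with canonical module, finiteness of injective dimension turns into finiteness of projective dimension after dualizing by $\omega$, so the situation should reduce to Theorem \ref{4.9}.

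\textbf{Step 1 (Foxby dualization).} Since $\id_R M<\infty$, the module $M$ belongs to the Bass class $\mathcal{B}_\omega(R)$, and $\omega\in\mathcal{B}_\omega(R)$ as well. Foxby equivalence supplies quasi-inverse exact equivalences $\Hom_R(\omega,-)$ and $\omega\otimes_R-$ between $\mathcal{B}_\omega(R)$ and the Auslander class $\mathcal{A}_\omega(R)$, matching $\add\omega$ with $\add R$. I would use this to produce a genuine resolving subcategory from $\X$ by setting
\[
\X'=\{\,N\in\mathcal{A}_\omega(R)\mid \omega\otimes_R N\in\X\,\}.
\]
Using that $R\in\mathcal{A}_\omega(R)$ with $\omega\otimes_R R\cong\omega\in\X$, that $\mathcal{A}_\omega(R)$ is closed under direct summands, extensions and kernels of epimorphisms, that $\omega\in\X$, and exactness of $\omega\otimes_R-$ on $\mathcal{A}_\omega(R)$, one checks that $\X'$ is a resolving subcategory of $\mod R$. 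Moreover $\Hom_R(\omega,M)\in\X'$, because $M\in\mathcal{B}_\omega(R)$ gives $\omega\otimes_R\Hom_R(\omega,M)\cong M\in\X$.

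\textbf{Step 2 (finite positive projective dimension; applying Theorem \ref{4.9}).} I claim $0<\pd_R\Hom_R(\omega,M)<\infty$. Finiteness is standard Foxby theory: for $N\in\mathcal{A}_\omega(R)$ one has $\pd_R N<\infty$ if and only if $\id_R(\omega\otimes_R N)<\infty$, so $\id_R M<\infty$ forces $\pd_R\Hom_R(\omega,M)<\infty$. For positivity, if $\Hom_R(\omega,M)$ were free, then $M\cong\omega\otimes_R\Hom_R(\omega,M)$ would be a finite direct sum of copies of $\omega$, hence MCM, contradicting that $M$ is non-MCM. Theorem \ref{4.9} now applies to the resolving subcategory $\X'$ and gives $\radius\X'=\infty$.

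\textbf{Step 3 (transfer back to $\X$); the main obstacle.} One has $\omega\otimes_R\X'=\X\cap\mathcal{B}_\omega(R)$, and $\Hom_R(\omega,-)$ converts each short exact sequence $0\to L'\to\omega^{n}\to L\to0$ with terms in $\mathcal{B}_\omega(R)$ into an ordinary syzygy sequence $0\to\Hom_R(\omega,L')\to R^{n}\to\Hom_R(\omega,L)\to0$. Thus the equivalence of Step 1 preserves not the radius but a \emph{radius relative to $\omega$}, obtained by using $\add\omega$ in place of $\add R$ in the construction of the thickening balls; concretely it identifies $\radius\X'$ with the $\omega$-radius of $\X\cap\mathcal{B}_\omega(R)$, which is therefore infinite. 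What remains is to deduce that the ordinary radius of $\X$ is infinite. The missing ingredient is a comparison asserting that the intersection of an ordinary thickening ball $[G]_n$ with $\mathcal{B}_\omega(R)$ lies in an $\omega$-thickening ball of radius bounded in terms of $n$; granting this, $\radius\X<\infty$ would force the $\omega$-radius of $\X\cap\mathcal{B}_\omega(R)$ to be finite, a contradiction. I expect this comparison to be the crux: the $\omega$-radius and the ordinary radius coincide precisely when $R$ is Gorenstein, so it genuinely demands new input, presumably exploiting that $\id_R\omega<\infty$ together with the realization of $R$ as a quotient of a Gorenstein local ring (available since $R$ admits a canonical module). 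Carrying this out — and doing so for an arbitrary semidualizing module, yielding at the same time a semidualizing analogue of Theorem \ref{4.9} — is what I take to be the technical heart of the matter.
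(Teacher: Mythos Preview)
You correctly single out the special case the paper actually proves (Corollary \ref{cor3.2}), and your Steps 1--2 are fine; in fact the paper's Remark \ref{rmk3.1} carries out exactly this reduction and concludes, as you do, that Foxby equivalence plus Theorem \ref{4.9} yields only that the $\omega$-radius of $\X\cap\B_\omega$ is infinite. The gap is your proposed resolution of Step 3. The comparison you want --- that $[G]_n\cap\B_\omega$ sits inside an $\omega$-ball of radius bounded in terms of $n$ --- would have to hold for an \emph{arbitrary} $G\in\mod R$, and Remark \ref{rmk3.1} names precisely this obstruction: the generator $G$ witnessing $\radius\X<\infty$ need not belong to $\B_\omega$ or $\A_\omega$, so there is nothing on the Foxby-dual side to compare with. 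The paper neither proves nor attempts any such inequality, and there is no indication one should exist.

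What the paper does instead is abandon the transfer idea and prove the semidualizing analogue of Theorem \ref{4.9} (Theorem \ref{thm3.1}) from scratch, by rerunning the Dao--Takahashi argument with $C$ in place of $R$: reduce to $\pd_C M=1$ with $\Ext^1_R(M,C)\cong k$ (Propositions \ref{prop3.3}--\ref{prop3.5}); deduce that the $C$-transposes $\tr_C(R/\m^i)=C\otimes_R\tr_R(R/\m^i)$ all lie in $\res_C M\subseteq\X$ (Proposition \ref{prop3.6}); and then (Proposition \ref{prop3.7}) obtain a contradiction from finite radius by passing to the completion, choosing a Gorenstein cover $S\twoheadrightarrow\widehat R$ via Cohen's structure theorem, and carrying out an annihilator-of-Ext argument against $\Hom_S(\widehat C,S)$ rather than $\widehat R$. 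The ordinary radius is attacked directly; the $\omega$-radius plays no role.
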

The main purpose of the present paper is to prove the following theorem, which says that Conjecture \ref{3.1} does hold if $\X$ contains a canonical module and the module $M$ has finite injective dimension.

\begin{thm}[Corollary \ref{cor3.2}] \label{thm1.1} 
Let $R$ be a Cohen--Macaulay local ring with a canonical module $\omega$. Let $\X$ be a resolving subcategory of $\mod R$. If $\X$ contains $\omega$ and a non-MCM module $M$ with $\id_R M <\infty$, then $\X$ has infinite radius.
\end{thm}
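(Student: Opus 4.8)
The plan is to reduce the statement to the finite-projective-dimension case, Theorem~\ref{4.9}, by means of Foxby equivalence with respect to the canonical module $\omega$. If $R$ is Gorenstein then $\omega\cong R$, so $\id_R M=\pd_R M$, which is finite and positive since $M$ is non-MCM, and Theorem~\ref{4.9} applies directly. So assume $R$ is not Gorenstein and $M\ne0$, and set $N:=\Hom_R(\omega,M)$. Since $\id_R M<\infty$, the module $M$ lies in the Bass class $\B_\omega$, and by the classical theory of finitely generated modules of finite injective dimension over a Cohen--Macaulay local ring with canonical module one has $\Ext^i_R(\omega,M)=0=\Tor^R_i(\omega,N)$ for all $i>0$, the evaluation map $\omega\otimes_R N\to M$ is an isomorphism, $N$ lies in the Auslander class $\A_\omega$, and $\pd_R N<\infty$. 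Moreover $N\ne0$ (else $M\cong\omega\otimes_R N=0$), and $N$ is not free (else $M\cong\omega^{\oplus n}$ would be maximal Cohen--Macaulay, since $\omega$ is). Hence $0<\pd_R N<\infty$.

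Next I would pass to the full subcategory
\[
\Y:=\{\,X\in\A_\omega \mid \omega\otimes_R X\in\X\,\}
\]
and check that it is a resolving subcategory of $\mod R$: it contains $R$ (as $R\in\A_\omega$ and $\omega\otimes_R R=\omega\in\X$), it is closed under direct summands because $\A_\omega$ and $\X$ are, and it is closed under extensions and under kernels of epimorphisms because $\A_\omega$ is resolving, every short exact sequence with all terms in $\A_\omega$ stays exact after applying $\omega\otimes_R-$ (as $\Tor^R_{>0}(\omega,X)=0$ on $\A_\omega$), and $\X$ has the corresponding closure properties. Since $N\in\A_\omega$ and $\omega\otimes_R N\cong M\in\X$, we get $N\in\Y$; as $0<\pd_R N<\infty$, Theorem~\ref{4.9} gives $\radius\Y=\infty$.

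It remains to transport this to $\X$, which I would do by contraposition. Suppose $\X$ lies in a ball $[G]_n$ for some $G$ and some $n$. Then for each $X\in\Y$ the module $\omega\otimes_R X$ lies in $\X\subseteq[G]_n$, hence in $[G]_n\cap\B_\omega$, so $X\cong\Hom_R(\omega,\omega\otimes_R X)\in\Hom_R(\omega,[G]_n\cap\B_\omega)$. Thus it would suffice to show that $\Hom_R(\omega,[G]_n\cap\B_\omega)$ is contained in some ball $[G']_{n'}$ with $G',n'$ depending only on $G,n$: this forces $\radius\Y<\infty$, contradicting the previous paragraph, so $\radius\X=\infty$. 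Here one uses that on $\B_\omega$ the functor $\Hom_R(\omega,-)$ is exact, commutes with finite direct sums and summands, sends $\add\omega$ to $\add R$, and is an equivalence onto $\A_\omega$ with quasi-inverse $\omega\otimes_R-$, under which the ordinary free syzygy corresponds to the relative syzygy with respect to $\add\omega$.

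The main obstacle, and the reason the theory of radii relative to a semidualizing module is needed, is that $[G]_n$ is not contained in $\B_\omega$: the Bass class is not closed under (free) syzygies — for instance $\Omega_R\omega\notin\B_\omega$ when $R$ is not Gorenstein — so one cannot simply push a ball construction through the left-exact functor $\Hom_R(\omega,-)$. My approach would be to replace, on the $\B_\omega$-side, free syzygies by relative $(\add\omega)$-syzygies, which do preserve $\B_\omega$, develop the attendant notion of radius internal to $\B_\omega$, and exploit its compatibility with Foxby equivalence to turn a bound on the radius of $\X$ into a bound on $\radius\Y$. Carrying out this comparison rigorously is the technical heart of the matter; granting it, $\radius\X=\infty$ and the theorem follows.
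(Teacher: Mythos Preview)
Your reduction to Theorem~\ref{4.9} via Foxby equivalence works cleanly up to the point where you obtain $\radius\Y=\infty$. The genuine gap is the final transport step, and you have correctly located it yourself: you need that $\X\subseteq[G]_n$ forces $\Y=\Hom_R(\omega,\X\cap\B_\omega)$ into some ball $[G']_{n'}$, but you only say ``granting it'' without supplying an argument. This is not a detail that can be granted. The ball $[G]_n$ is built from free syzygies and extensions, and even when a module $Z$ lies in $[G]_n\cap\B_\omega$, the intermediate objects witnessing $Z\in[G]_n$ need not lie in $\B_\omega$ (as you note, $\Omega_R\omega\notin\B_\omega$ already). So one cannot push the ball structure through $\Hom_R(\omega,-)$, and there is no evident way to replace $[G]_n$ by a $C$-ball $[G']^C_{n'}$ on $\B_\omega$ when $G$ itself is arbitrary. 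The paper makes exactly this point in Remark~\ref{rmk3.1}: the easy Foxby argument yields only $\radius_C\X=\infty$, and this does \emph{not} imply $\radius\X=\infty$ precisely because the centre $G$ need not belong to $\B_\omega$.

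The paper's route is therefore different from yours in an essential way: rather than trying to compare balls across Foxby equivalence, it reproves the entire Dao--Takahashi argument with $C$ in place of $R$. One reduces to $\pd_C M=1$ with $\Ext^1_R(M,C)\cong k$ (Propositions~\ref{prop3.3}--\ref{prop3.5}), shows that then $\tr_C(R/\m^i)\in\res_C M\subseteq\X$ for all $i$ (Proposition~\ref{prop3.6}), and finally runs the annihilator-of-Ext argument over a Gorenstein cover $S\twoheadrightarrow\widehat R$ directly with the modules $\tr_{\widehat C}(\widehat R/\n^i\widehat R)$ (Proposition~\ref{prop3.7}). This last step works with the ordinary radius of $\X$ and an arbitrary centre $G$, so it never needs $G\in\B_\omega$; that is what your approach is missing.
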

In fact, this theorem is a corollary of the next theorem, which  simultaneously recovers Theorem \ref{4.9} stated above; for the notation and terminology, see Definitions \ref{defofcpd} and \ref{defofcres}(1).

\begin{thm} [Theorem \ref{thm3.1}] \label{thm1.2}
Let $C$ be a semidualizing $R$-module and $\X$ a $C$-resolving subcategory of $\mod R$. If $\X$ contains a module $M$ such that $0<\pd_C M <\infty$, then $\X$ has infinite radius.
\end{thm}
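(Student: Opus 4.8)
The plan is to reduce this statement to Theorem \ref{4.9} by means of a change-of-rings argument. Recall that a semidualizing module $C$ comes equipped with the isomorphism $R \xrightarrow{\sim} \Hom_R(C,C)$ and vanishing self-extensions, and that finite $C$-projective dimension (the condition $\pd_C M < \infty$) means that $M$ lies in the Auslander class and admits a finite resolution by modules of the form $C \otimes_R F$ with $F$ free. The key device is the Foxby equivalence: the adjoint pair $C \otimes_R -$ and $\Hom_R(C,-)$ restricts to an equivalence between the Auslander class $\A_C$ and the Bass class $\B_C$, which in particular sends modules of finite projective dimension to modules of finite $C$-projective dimension and vice versa. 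So a module $M$ with $0 < \pd_C M < \infty$ is, up to this equivalence, a module $N = \Hom_R(C,M)$ with $0 < \pd_R N < \infty$ living in the Auslander class.

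First I would set up the precise dictionary between $C$-resolving subcategories of $\mod R$ and ordinary resolving subcategories: applying $\Hom_R(C,-)$ to a $C$-resolving subcategory $\X$ should produce a resolving subcategory $\X'$ of $\mod R$ (or of an appropriate subcategory closed under the relevant operations), because the defining closure conditions for $C$-resolving subcategories — containing the $C$-projectives, closed under $C$-syzygies, extensions, and direct summands — translate under Foxby equivalence exactly into the closure conditions for resolving subcategories. Second, I would check that this correspondence is compatible with the formation of radii: building a module from a fixed generator using finitely many cones, extensions, syzygies, summands and shifts has a mirror image on the other side of the equivalence, so $\radius \X' \le \radius \X$ (and in fact they should be equal, but one inequality suffices). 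Third, the module $M \in \X$ with $0 < \pd_C M < \infty$ corresponds to $N = \Hom_R(C,M) \in \X'$ with $0 < \pd_R N < \infty$; then Theorem \ref{4.9} applies to $\X'$, giving $\radius \X' = \infty$, whence $\radius \X = \infty$.

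The main obstacle I anticipate is the bookkeeping around exactness and the behavior of syzygies under the functors $C\otimes_R-$ and $\Hom_R(C,-)$: these functors are not exact in general, so one must work inside the Auslander and Bass classes where the relevant $\Tor$ and $\Ext$ modules vanish, and verify that short exact sequences of modules in these classes stay short exact after applying the functors, and that a $C$-syzygy of a module in $\A_C$ again lies in $\A_C$. One also has to be careful that the notion of radius is defined via a filtration/devissage using the subcategory of modules of the relevant projective-like dimension zero (free modules, resp.\ $\add C$), and confirm that $\add C$ is exactly the image of $\add R$ under $C \otimes_R -$, so the "ground level" of the two radius computations match up. Once these compatibilities are in place, the reduction to Theorem \ref{4.9} is formal. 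An alternative, if the translation of closure properties proves delicate, is to run the Dao--Takahashi argument directly in the $C$-relative setting, replacing free modules by $\add C$ throughout and using the vanishing $\Ext^{>0}_R(C,C)=0$ to control the relevant Ext computations; but I expect the Foxby-equivalence route to be cleaner.
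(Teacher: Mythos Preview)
Your Foxby-equivalence reduction has a genuine gap at the second step, the claimed inequality $\radius \X' \le \radius \X$. The radius of $\X$ is the infimum of $n$ with $\X \subseteq [G]_{n+1}$ for some $G \in \mod R$, and the crucial point is that $G$ ranges over \emph{all} finitely generated modules, not only those in $\B_C$. If $\X \subseteq [G]_{r+1}$ with $G$ arbitrary, neither $G$ nor the intermediate modules built from it by ordinary syzygies, extensions, and summands need lie in $\B_C$, so applying $\Hom_R(C,-)$ gives no control: the functor is not exact there, and ordinary syzygies $\Omega^i G$ do not translate into anything usable on the other side. The Foxby equivalence is an equivalence between $\B_C$ and $\A_C$, not on all of $\mod R$, while the radius is measured against all of $\mod R$. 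The paper makes exactly this observation in Remark~\ref{rmk3.1}: one can define a $C$-radius using balls centered only at modules of $\B_C$, and then your argument does give $\radius_C \X = \infty$; but this does not imply $\radius \X = \infty$. (A minor slip as well: modules of finite $C$-projective dimension lie in the Bass class $\B_C$, not the Auslander class; and a $C$-resolving subcategory need not be contained in $\B_C$ at all, since $\mod R$ itself is $C$-resolving.)

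Your proposed alternative, rerunning the Dao--Takahashi argument directly in the $C$-relative setting, is precisely what the paper does and is in fact necessary rather than a mere fallback. The paper first reduces to $\pd_C M = 1$ with $\Ext^1_R(M,C)\cong k$ (Propositions \ref{prop3.3}--\ref{prop3.5}; here Foxby equivalence \emph{is} used, but only to transport reductions already known for $\res_R \Hom_R(C,M)$ back into $\res_C M$), then shows $\tr_C(R/\m^i)\in\res_C M\subseteq\X$ for all $i>0$ (Proposition \ref{prop3.6}), and finally proves directly, via annihilators of $\Ext$ over a Gorenstein cover of the completion, that no ball $[G]_{r+1}$ with $G\in\mod R$ arbitrary can contain all these transposes (Proposition \ref{prop3.7}). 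The divergence of the ordinary radius is thus established without invoking Theorem~\ref{4.9} as a black box.
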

The organization of this paper is as follows. In section 2 we state basic definitions and their properties for later use. In section 3 we state and prove various propositions, and by using them we give proofs of the above theorems. In section 4 we give an application of Theorem \ref{thm1.1}, which deals with the case where the Cohen--Macaulay ring $R$ does not have a canonical module.
\begin{conv}

Throughout this paper, all rings are commutative noetherian local rings, all modules are finitely generated, and all subcategories are full and strict, i.e., closed under isomorphism. Let $R$ be a (commutative noetherian local) ring with maximal ideal $\m$ and residue field $k$. 

\end{conv}

\section{Preliminaries}

In this section, we recall some definitions and basic properties of them needed later. We begin with the notion of semidualizing modules. 

\begin{dfn}
An $R$-module $C$ is said to be {\em semidualizing} if the following hold. \\
(1) The natural homomorphism $R \to \Hom_R(C,C)$ is an isomorphism. \\
(2) One has $\Ext^{i}_R(C,C)= 0 $ for all $i>0$.
\end{dfn}

Note that $R$ is itself a semidualizing $R$-module. If $R$ is Cohen--Macaulay local ring with canonical module $\omega$, then $\omega$ is a semidualizing $R$-module. \par
The proposition below provides the basic properties of a semidualizing module.

\begin{prop} \label{prop2.1}
Let $C$ be a semidualizing $R$-module. Then the following hold. 
\begin{enumerate}[\rm(1)]
\item
One has $\supp C = \spec R$. 
\item
Let $R \to S $ be a flat homomorphism of rings. Then $C\otimes_R S $ is a semidualizing $S$-module. 
\item
The equality $\depth C = \depth R$ holds. 
\end{enumerate}
\end{prop}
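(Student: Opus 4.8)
The plan is to settle (1) and (2) by formal localization and base-change arguments, and to prove (3) by induction on $\depth R$ using (2).

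For (1): since $R$ is noetherian, $C$ is finitely presented, so localization commutes with $\Hom_R(C,C)$, and for each $\p\in\spec R$ the defining isomorphism $R\xrightarrow{\sim}\Hom_R(C,C)$ localizes to $R_\p\xrightarrow{\sim}\Hom_{R_\p}(C_\p,C_\p)$; if $C_\p$ were $0$ the right-hand side would vanish while $R_\p\ne 0$, so $C_\p\ne 0$ and $\p\in\supp C$. For (2): the input is the flat base-change isomorphism $S\otimes_R\Ext^i_R(M,N)\cong\Ext^i_S(S\otimes_R M,\,S\otimes_R N)$ for all $i$, valid whenever $M$ is finitely generated over the noetherian ring $R$ and $S$ is flat over $R$ (resolve $M$ by finite free modules, apply $\Hom_R(-,N)$, then $S\otimes_R-$, and use flatness together with $S\otimes_R\Hom_R(R^n,N)\cong\Hom_S(S^n,\,S\otimes_R N)$). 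Taking $M=N=C$ gives $\Hom_S(S\otimes_R C,S\otimes_R C)\cong S\otimes_R\Hom_R(C,C)\cong S$ (a short diagram chase shows this is the natural map) and $\Ext^i_S(S\otimes_R C,S\otimes_R C)\cong S\otimes_R\Ext^i_R(C,C)=0$ for $i>0$, so $S\otimes_R C$ is semidualizing over $S$.

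For (3) I would induct on $d:=\depth R$. Base case $d=0$: by tensor--hom adjunction $\Hom_R(k,R)\cong\Hom_R(k,\Hom_R(C,C))\cong\Hom_R(k\otimes_R C,C)\cong\Hom_R(k,C)^{\mu}$ with $\mu=\dim_k(C/\m C)\ge 1$ by Nakayama, hence $\depth R=0\iff\Hom_R(k,R)\ne 0\iff\Hom_R(k,C)\ne 0\iff\depth C=0$. For the step $d\ge 1$, first observe $\ass_R C=\ass_R R$: indeed $\p\in\ass_R C$ iff $\depth_{R_\p}C_\p=0$, which by (2) and the base case applied over the local ring $R_\p$ is equivalent to $\depth R_\p=0$, i.e. $\p\in\ass_R R$; so any $R$-regular element $x\in\m$ (which exists since $d\ge 1$) is automatically $C$-regular. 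Next, $C/xC$ is semidualizing over $\bar R:=R/xR$: feeding $0\to C\xrightarrow{x}C\to C/xC\to 0$ into $\Hom_R(C,-)$ and using $\Ext^i_R(C,C)=0$ for $i>0$ yields $\Hom_R(C,C/xC)\cong\Hom_R(C,C)/x\Hom_R(C,C)\cong\bar R$ and $\Ext^i_R(C,C/xC)=0$ for $i>0$; since $x$ is a nonzerodivisor on both $R$ and $C$, reducing a finite free $R$-resolution of $C$ modulo $x$ resolves $C/xC$ over $\bar R$, giving $\Ext^i_{\bar R}(C/xC,-)\cong\Ext^i_R(C,-)$ on $\bar R$-modules, and together with $\Hom_{\bar R}(C/xC,C/xC)=\Hom_R(C,C/xC)$ this upgrades the previous line to $\Hom_{\bar R}(C/xC,C/xC)\cong\bar R$ (via the natural map) and $\Ext^i_{\bar R}(C/xC,C/xC)=0$ for $i>0$. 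Finally $\depth_R C=\depth_{\bar R}(C/xC)+1=\depth\bar R+1=\depth R$, the middle equality being the inductive hypothesis applied to the semidualizing $\bar R$-module $C/xC$, noting $\depth\bar R=d-1$.

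The main obstacle is confined to (3): pushing the semidualizing property along $R\to\bar R$, which depends on the change-of-rings isomorphism for $\Ext$ and hence on $x$ being regular on $C$ as well as on $R$ --- exactly the point where one needs $\ass C=\ass R$, itself extracted by localizing back to the depth-zero base case. Parts (1), (2) and the base case of (3) are pure adjunction/base-change formalism.
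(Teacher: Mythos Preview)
Your proof is correct. The paper itself gives essentially no argument: it declares (1) and (2) ``easy to see by definition'' and for (3) simply cites \cite{Golod}. Your treatment of (1) and (2) is precisely the routine verification the paper gestures at. For (3) you supply a self-contained induction on $\depth R$ in place of the citation; the argument is sound --- the biconditional $\depth R=0\iff\depth C=0$ established via tensor--hom adjunction localizes (using (2)) to give $\ass_R C=\ass_R R$, so a common nonzerodivisor $x$ exists, and your change-of-rings computation then shows $C/xC$ is semidualizing over $R/xR$, closing the induction. What your route buys is self-containment; what the paper's citation buys is brevity.
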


\begin{proof}
The statements (1) and (2) are easy to see by definition. The statement (3) is shown in \cite{Golod}.
\end{proof}

Let $\X$ be a subcategory of $\mod R$. We denote by $\add \X$ (or $\add_R \X$) the {\em additive closure of $\X$}, namely, the subcategory of $\mod R$ consisting of direct summands of finite direct sums of modules in $\X$. When $\X$ consists of a single module $X$, we simply denote it by $\add X$ (or $\add_R X$). 
\begin{rmk}
If $C$ is a semidualizing $R$-module, then $\add_R C$ consists all modules of the form $C^{\oplus n}$ with $n\geq0$. Indeed, suppose that $N$ is a direct summand of $C^{\oplus m} $. Then $\Hom_R(C,N)$ is a direct summand of $R^{\oplus m}$. This implies that $\Hom_R(C,N)$ is free $R$-module because $R$ is local. Hence we have $\Hom_R(C,N) \cong R^{\oplus n} $ for some $n\geq 0$ and applying the functor $C\otimes_R -$, we get the isomorphisms $N \cong C\otimes_R \Hom_R(C,N) \cong C^{\oplus n}$.
\end{rmk}

Next we recall the definition of projective dimension relative to a fixed semidualizing module.

\begin{dfn} \label{defofcpd}
Let $C$ be a semidualizing $R$-module and $M$ an $R$-module. Suppose that $M$ is nonzero. The {\em $C$-projective dimension of $M$}, denoted $\pd_C M$, is defined as the infimum of the lengths of ($\add_R C$)-resolutions, namely, exact sequences of the form $0\to C_n \to C_{n-1} \to \cdots \to C_1 \to C_0 \to M \to 0$ with each $C_i$ belonging to $\add_R C$.
For convention, we set $\pd_C 0 = -\infty$.
\end{dfn}

The following proposition shows that $C$-projective dimension can be replaced with ordinary projective dimension by applying the functor $\Hom_R (C,-)$

\begin{prop} \label{prop2.3}
Let $C$ be a semidualizing $R$-module and $M$ an $R$-module. Then we have $\pd_C M = \pd_R \Hom_R(C, M)$.
\end{prop}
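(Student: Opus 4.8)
The plan is to show both directions of the equality $\pd_C M = \pd_R \Hom_R(C,M)$ by transporting $(\add_R C)$-resolutions and ordinary projective resolutions back and forth through the adjoint functors $\Hom_R(C,-)$ and $C \otimes_R -$. The crucial technical input will be that on the relevant modules these functors are mutually inverse and exact: specifically, if $N \in \add_R C$ then $\Hom_R(C,N)$ is free (this is exactly the Remark following Proposition \ref{prop2.1}), and for any $C$-projective-dimension-finite module the natural maps $C \otimes_R \Hom_R(C,M) \to M$ and $N \to \Hom_R(C, C\otimes_R N)$ are isomorphisms.

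First I would treat the inequality $\pd_R \Hom_R(C,M) \le \pd_C M$. Take a finite $(\add_R C)$-resolution $0 \to C_n \to \cdots \to C_0 \to M \to 0$ realizing $\pd_C M = n$. Applying $\Hom_R(C,-)$, the key point is that this functor is exact on this complex: the higher Ext obstructions $\Ext^{>0}_R(C, C_i)$ vanish because each $C_i$ is a direct sum of copies of $C$ and $\Ext^{>0}_R(C,C)=0$ by the definition of semidualizing; a standard dimension-shifting/induction on $n$ along the short exact sequences obtained by breaking up the resolution then shows $\Ext^{>0}_R(C,-)$ kills each syzygy as well, so the complex $0 \to \Hom_R(C,C_n) \to \cdots \to \Hom_R(C,C_0) \to \Hom_R(C,M) \to 0$ stays exact. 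By the Remark each $\Hom_R(C,C_i)$ is a finitely generated free $R$-module, so this is a free (hence projective) resolution of $\Hom_R(C,M)$ of length $n$, giving $\pd_R \Hom_R(C,M) \le n$.

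Conversely, for $\pd_C M \le \pd_R \Hom_R(C,M)$, set $X = \Hom_R(C,M)$ and take a finite free resolution $0 \to F_m \to \cdots \to F_0 \to X \to 0$ with $m = \pd_R X$. Now apply $C \otimes_R -$. One needs exactness here: $\Tor^R_{>0}(C, X) = 0$ must hold, and more generally $\Tor^R_{>0}$ of $C$ against every syzygy of $X$ must vanish, so that $0 \to C\otimes F_m \to \cdots \to C \otimes F_0 \to C \otimes X \to 0$ is exact. Since each $C \otimes F_i \in \add_R C$ and, using that $C \otimes_R \Hom_R(C,M) \cong M$ (the evaluation map is an isomorphism when $M$ has finite $C$-projective dimension — which, having shown the first inequality, we know is equivalent to $\pd_R X < \infty$), the resulting exact sequence is an $(\add_R C)$-resolution of $M$ of length $m$, whence $\pd_C M \le m$. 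Combining the two inequalities yields the desired equality; the boundary/degenerate cases ($M = 0$, or $\pd$ equal to $0$ or $\infty$) are handled separately and are routine.

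I expect the main obstacle to be justifying the vanishing of the relevant Tor (and the isomorphism $C \otimes_R \Hom_R(C,M) \cong M$) in the second direction — i.e., verifying that $M$ genuinely lies in the "Auslander-class-like" part of $\mod R$ relative to $C$ where the unit and counit of the adjunction are isomorphisms and the derived functors vanish. This is where one must be careful: such statements are generally proved by induction on projective dimension for $X$, starting from the trivial case $X$ free (where $C \otimes_R X$ is a direct sum of copies of $C$, all Tor vanish, and $\Hom_R(C, C\otimes_R X) \cong X$ by the defining isomorphism $R \cong \Hom_R(C,C)$) and propagating along short exact sequences via the long exact sequences for $\Tor$ and $\Ext$. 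If the excerpt's later sections develop the Auslander/Bass class formalism, I would cite it; otherwise this induction is the substantive content of the proof and everything else is bookkeeping.
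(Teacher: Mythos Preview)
The paper's own proof is a one-line citation to \cite[Theorem~2.11]{Takahashi White}; you instead attempt a direct argument. Your first inequality $\pd_R \Hom_R(C,M)\le \pd_C M$ is correctly sketched: the vanishing $\Ext^{>0}_R(C,C)=0$ propagates down the syzygies of an $(\add_R C)$-resolution, so $\Hom_R(C,-)$ stays exact and produces a free resolution.

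The reverse inequality, however, has a genuine gap that your proposed fix does not close. You need the counit $C\otimes_R \Hom_R(C,M)\to M$ to be an isomorphism, i.e.\ $M\in\B_C$. Your parenthetical justification (``having shown the first inequality, we know [finite $\pd_C M$] is equivalent to $\pd_R X<\infty$'') is circular: the first inequality gives only $\pd_C M<\infty\Rightarrow\pd_R X<\infty$, not the converse you are invoking. Your suggested induction on $\pd_R X$ does establish that $X=\Hom_R(C,M)\in\A_C$ (any module of finite projective dimension lies in $\A_C$), and hence that $C\otimes_R X\in\B_C$ with $\pd_C(C\otimes_R X)\le\pd_R X$; but this concerns $C\otimes_R X$, not $M$. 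Identifying the two is precisely the Bass-class membership of $M$ that is at issue, and the induction does not carry it: the syzygies of $X$ are not a priori of the form $\Hom_R(C,M')$ for any $M'$ related to $M$, so there is no inductive statement about $M$ to propagate. Showing that $\pd_R\Hom_R(C,M)<\infty$ forces $M\in\B_C$ (or circumventing this) is the substantive content of the Takahashi--White result the paper cites, and it requires an argument beyond what you have outlined.
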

\begin{proof} The assertion follows from \cite[Theorem 2.11]{Takahashi White}.
\end{proof}

We recall the notions of the Bass class and the Auslander class with respect to a semidualizing module and basic properties of them.

\begin{dfn}
Let $C$ be a semidualizing $R$-module. The {\em Bass class with respect to C}, denoted $\B_C$, is a subcategory of $\mod R$ consisting of modules $M$ satisfying the following two conditions.
\begin{enumerate}
\item
One has $\Ext^{i}_R(C,M)= \Tor^R_{i}(C, \Hom_R(C,M))= 0$ for all $i>0$. 
\item
The natural homomorphism $M\to\Hom_R(C,C\otimes_R M)$ is an isomorphism.
\end{enumerate}
Dually, the {\em Auslander class with respect to C}, denoted $\A_C$, is a subcategory of $\mod R$ consisting of modules $M$  satisfying the following two conditions.
\begin{enumerate}
\item
One has $\Tor^R_{i}(C, M)= \Ext^{i}_R(C, C\otimes_R M)=0 $ for all $i>0$.
\item 
The natural homomorphism $M\to\Hom_R(C, C\otimes_R M)$ is an isomorphism.
\end{enumerate} 
\end{dfn}

\begin{rmk}
The notions of the Bass class and the Auslander class are usually defined as subcategories of the category $\Mod R$ of all $R$-modules. Thus the Bass and Auslander classes in our sense are the restrictions of the usual ones to $\mod R$. 
\end{rmk}

The equivalence between the Bass and Auslander classes given below is often called the {\em Foxby equivalence}.

\begin{prop} \label{prop2.2}

Let $C$ be a semidualizing $R$-module. Then the following hold. 
\begin{enumerate} [\rm(1)]
\item
The functors $\Hom_R(C,-):\B_C \to \A_C$ and $C\otimes_R - : \A_C \to \B_C$ are category equivalences. 
\item
The subcategories $\B_C$ and $\A_C$ are closed under direct summands. 
\item
The subcategories $\B_C$ and $\A_C$ have the property that if two of the three modules in a short exact sequence in $\mod R$ are in the subcategory, then so is the third.
\end{enumerate} 
\end{prop}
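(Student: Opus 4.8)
These three statements are standard consequences of the Foxby formalism (see, e.g., \cite{Takahashi White}), so in the paper one may simply cite them; here is the route I would take to prove them directly, the only genuine obstacle being a single case of part~(3). Part~(1) is formal, using only the adjunction $C\otimes_R-\dashv\Hom_R(C,-)$ with its unit $\eta$ and counit $\varepsilon$: for $M\in\B_C$ the vanishing $\Tor^R_i(C,\Hom_R(C,M))=0$ $(i>0)$ is part of the definition, the structural isomorphism of the Bass class provides $C\otimes_R\Hom_R(C,M)\cong M$, substituting this into the remaining two conditions yields $\Ext^i_R(C,C\otimes_R\Hom_R(C,M))\cong\Ext^i_R(C,M)=0$ for $i>0$, and the triangle identity $\Hom_R(C,\varepsilon_M)\circ\eta_{\Hom_R(C,M)}=\id$ forces $\eta_{\Hom_R(C,M)}$ to be an isomorphism; thus $\Hom_R(C,M)\in\A_C$, and symmetrically $C\otimes_R M\in\B_C$ for $M\in\A_C$, while the structural isomorphisms make the two composites naturally isomorphic to the identities, so the functors are quasi-inverse equivalences. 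Part~(2) is pure additivity: $\Ext^i_R(C,-)$, $\Tor^R_i(C,-)$, $\Hom_R(C,-)$, $C\otimes_R-$ and the natural maps $\eta,\varepsilon$ are additive, so each defining condition descends to a direct summand (a summand of the zero module is zero, and a summand of an isomorphism is an isomorphism).

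For part~(3), fix a short exact sequence $0\to A\to B\to D\to 0$ in $\mod R$. In each of the three cases, the long exact sequences of $\Ext$ and $\Tor$ against $C$ give all the needed Ext/Tor-vanishing in homological degrees $\geq 2$ at once; for the structural isomorphism I would compare the given sequence with its image under $\Hom_R(C,C\otimes_R-)$ (if the class in question is $\A_C$) or under $C\otimes_R\Hom_R(C,-)$ (if it is $\B_C$), and apply the five lemma once two of the three vertical maps are recognized as structural isomorphisms. When the two known modules are $A$ and $D$ (the extension case) the comparison sequence is exact for free and the conclusion is immediate. The remaining two cases — the unknown module being the kernel $A$, respectively the cokernel $D$ — need a short four/five-lemma chase, and in exactly one of them (the cokernel $D$ when the class is $\A_C$, the kernel $A$ when it is $\B_C$) there is one non-formal ingredient.

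That ingredient is the detection property: since $\supp C=\spec R$ by Proposition~\ref{prop2.1}(1), one has $\Hom_R(C,X)=0\Rightarrow X=0$ and $C\otimes_R X=0\Rightarrow X=0$ (check at an associated prime of $X$, respectively by Nakayama at a prime of $\supp X$). In the critical case the obstruction to exactness of the comparison sequence is a copy of $\Tor^R_1(C,D)$ (when $A,B\in\A_C$) or of $\Ext^1_R(C,A)$ (when $B,D\in\B_C$); a short diagram chase identifies it as the kernel, respectively cokernel, of a map that, after applying $C\otimes_R-$ respectively $\Hom_R(C,-)$, is identified through naturality of the counit respectively unit with the surjection $B\twoheadrightarrow D$ respectively with an injective map, so that functor annihilates it and hence it vanishes. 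With that in hand the remaining Ext/Tor-conditions and the five-lemma step close all three cases, proving~(3). I expect this last case to be the main obstacle; everything else reduces to the long exact sequences and the five lemma.
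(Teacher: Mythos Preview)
Your proposal goes further than the paper itself, which simply declares (1) and (2) ``routine to check'' and cites \cite[Lemma~1.3]{Foxby} for (3) --- exactly as you anticipated. Your direct route is the standard Foxby argument and is essentially correct; the detection property drawn from $\supp C=\spec R$ is indeed the one non-formal ingredient needed.

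One small slip in the critical case of~(3): your two ``respectively'' strands are crossed. When $A,B\in\A_C$ and the obstruction is $\Tor^R_1(C,D)$, the functor to apply is $\Hom_R(C,-)$ (not $C\otimes_R-$): left-exactness gives
\[
0\to\Hom_R(C,\Tor^R_1(C,D))\to\Hom_R(C,C\otimes_RA)\to\Hom_R(C,C\otimes_RB),
\]
and the unit $\eta$ identifies the last map with the injection $A\hookrightarrow B$, so $\Hom_R(C,\Tor^R_1(C,D))=0$ and detection finishes. Dually, when $B,D\in\B_C$ and the obstruction is $\Ext^1_R(C,A)$, one applies $C\otimes_R-$: right-exactness and the counit $\varepsilon$ identify
\[
C\otimes_R\Hom_R(C,B)\to C\otimes_R\Hom_R(C,D)\to C\otimes_R\Ext^1_R(C,A)\to 0
\]
with $B\twoheadrightarrow D\to C\otimes_R\Ext^1_R(C,A)\to 0$, whence $C\otimes_R\Ext^1_R(C,A)=0$. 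With that interchange your sketch is complete.
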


\begin{proof}
The statements (1) and (2) are routine to check. The statement (3) is shown in \cite[Lemma 1.3]{Foxby}.
\end{proof}

Note that if an $R$-module $M$ has finite $C$-projective dimension, $M$ belongs to $\B_C$, since $\B_C$ is closed under cokernels of monomorphisms and contains $\add C$. \par
We recall the definitions of a resolving subcategory and the resolving closure of a subcategory of $\mod R$.
\begin{dfn} \label{defofcres}
Let $C$ be a semidualizing $R$-module.
\begin{enumerate}[\rm(1)]
\item
A subcategory $\X$ of $\mod R$ is {\em $C$-resolving} if $\X$ contains $C$ and is closed under direct summands, extensions, and kernels of epimorphisms in $\mod R$. 
\item
For a subcategory $\X$ of $\mod R$, we denote by $\res_{C} \X$ the {\em $C$-resolving closure of $\X$}, namely the smallest $C$-resolving subcategory of $\mod R$ containing $\X$. Similarly, we denote by $\seek \X$ the smallest subcategory of $\mod R$ which contains $\X$ and is closed under direct summands, extensions and kernels of epimorphisms in $\mod R$.
\end{enumerate}
\end{dfn}

Now we recall the definition of the radius of a subcategory of $\mod R$.
\begin{dfn}Let $M$ be an $R$-module. 
\begin{enumerate}[\rm(1)]
\item
Take a minimal free resolution $\cdots \xrightarrow{\delta_{n+1}} F_n \xrightarrow{\delta_n} F_{n-1} \xrightarrow{\delta_{n-1}}\cdots \xrightarrow{\delta_1}F_0 \to M \to 0$ of $M$. Then for each $n\geq 1$, the image of $\delta_n$ is called the {\em $n$-th syzygy of $M$} and denoted by $\Omega^n M$. For convention, we set $\Omega^0 M = M$.
\item
For a subcategory $\X$ of $\mod R$ we denote by $\lbrack \X \rbrack$ the additive closure of the subcategory consisting of $R$ and all modules of the form $\Omega^i X$, where $i\geq0$ and $X\in \X$. When $\X$ consists of a single module $X$, we simply denote it by $\lbrack X \rbrack$. 
\item
For subcategories $\X, \Y$ of $\mod R$ we denote by $\X \circ \Y$ the subcategory of $\mod R$ consisting of $R$-modules $Z$ which fits into an exact sequence $0 \to X \to Z \to Y \to 0$ with $X\in \X$ and $Y\in \Y$. We set $\X \bullet \Y = \lbrack\lbrack \X \rbrack \circ \lbrack \Y \rbrack \rbrack$. 
\item
Let $\X$ be a subcategory of $\mod R$. We define the {\em ball of radius $r$ centered at $\X$} as 
\begin{equation}
\lbrack\X\rbrack_r = \nonumber
\begin{cases}
\lbrack\X\rbrack & \text{if $r=1$,} \\
\lbrack\X\rbrack_{r-1} \bullet \lbrack\X\rbrack & \text{if $r>1$.}
\end{cases}
\end{equation}
If $\X$ consists of a single module $G$, then we simply denote $\lbrack\X\rbrack_r$ by $\lbrack G\rbrack_r$, and call it the ball of $\radius r$ centered at $G$. We write $\lbrack\X\rbrack^R _r$ when we should specify that $\mod R$ is the ground category where the ball is defined. 
Note that the operator “ $\bullet$ " satisfies the associative law by \cite[Proposition 2.2]{Dao Takahashi}. 
\item
Let $\X$ be a subcategory of $\mod R$. We define the {\em radius} of $\X$, denoted by $\radius\X$, as the infimum of the integers $n\geq 0$ such that there exists a ball of $\radius n+1$ centered at a module containing $\X$, that is, 
\begin{equation} \label{radiusdef}
\radius\X=\inf\{n\ge0\mid\X\subseteq[G]_{n+1}\text{ for some }G\in\mod R\}.
\end{equation}
\end{enumerate}
\end{dfn}

\begin{rmk} 
Let $C$ be a semidualizing $R$-module. 
Suppose $M$ belongs to $\B_C$ and take a minimal free resolution $F$ of $\Hom_R(C,M)$. Applying the functor $C\otimes_R  -$, we get the {\em minimal $C$-resolution} $C\otimes_R F$ of $M$ and can define the {\em $i$-th $C$-syzygy}  $\Omega^{i}_{C} M$ of $M$ as the image of each differential map. Similarly, for a subcategory $\X$ of $\mod R$, we can consider the additive closure $[\X]^C$ of the subcategory consisting of $C$ and all modules of the form $\Omega^{i}_{C} X $, where $i\geq0$ and $X\in \X$. Hence we can define the {\em $C$-radius} of $\X$, denoted by $\radius_C \X$, as the infimum of the integers $n\geq 0$ such that there exists a {\em $C$-ball} of $\radius n+1$ centered at a module which belongs to $\B_C$ such that $\X$ is contained in the $C$-ball. However, these concepts are only a formality and cannot lead to consequence we want; see \ref{rmk3.1}.  
\end{rmk}

We close this  section by defining  the transpose of a module with respect to a fixed semidualizing module.
\begin{dfn} Let $M$ be an $R$-module. Take a minimal free resolution $\cdots \xrightarrow{\delta_{n+1}} F_n \xrightarrow{\delta_n} F_{n-1} \xrightarrow{\delta_{n-1}}\cdots \xrightarrow{\delta_2} F_1 \xrightarrow{\delta_1}F_0 \to M \to 0$ of $M$. 
\begin{enumerate}[\rm(1)]
\item
The cokernel of the $R$-dual map $\delta^* _1 : F^*_0 \to F^*_1$ , where $(-)^* = \Hom_R(-,R)$,  is called the ({\em Auslander}) {\em  transpose} of $M$ and denoted by $\tr_R M$. 
\item
Let $C$ be a semidualizing $R$-module. Similarly, the cokernel of the $C$-dual map $\delta^{\dagger} _1 : F^{\dagger}_0 \to F^{\dagger}_1$ , where $(-)^{\dagger} = \Hom_R(-,C)$, is called the {\em transpose} of $M$ {\em with respect to $C$} and denoted by $\tr_C M$. 
Note that for any finitely generated projective $R$-module $P$, there exists a functorial isomorphism $C\otimes_R P^* \cong P^{\dagger}$. Hence we have $\tr_C M \cong C\otimes_R \tr_R M$.
\end{enumerate}
\end{dfn}

\section{Proof of main theorem}
In this section we give the proofs of Theorem \ref{thm1.1} and \ref{thm1.2}.
We introduce some notation for convenience. Let $\X$ be a subcategory of $\mod R$. For a prime ideal $\p$ of $R$, we denote by $\X_{\p}$ the subcategory of $\mod R_{\p}$ consisting of all modules of the form $X_{\p}$, where $X\in \X$. In the same way, we denote by $\widehat{\X}$ the subcategory of $\mod \widehat{R}$ consisting of all modules of the form $\widehat{X}$, where $X \in \X$. Moreover, for any functor $F:\mod R \to \mod R$, we denote by $F\X$ the subcategory of $\mod R$ consisting of all modules of the form $FX$, where $X\in \X$. \par

The essence of the next proposition is the Foxby equivalence given in \ref{prop2.2}. From this viewpoint, the assertion would be almost obvious since the resolvingness of subcategories is preserved by category equivalences.

\begin{prop} \label{prop3.1}
Let $C$ be a semidualizing $R$-module and $\X, \Y$ subcategories of $\mod R$. Then the following hold. 
\begin{enumerate}[\rm(1)]
\item
If $\X$ is contained in $\B_C$, then $\Hom_R(C,\res_C \X )= \res_R \Hom_R(C,\X)$. 
\item
If $\Y$ is contained in $\A_C$, then $C\otimes_R \res_R \Y = \res_C(C\otimes_R \Y)$.
\end{enumerate}
\end{prop}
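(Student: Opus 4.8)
The plan is to prove part (1) and then obtain part (2) by applying the Foxby equivalence (Proposition \ref{prop2.2}(1)) in the opposite direction; in fact the two statements are formally equivalent once one knows that $\Hom_R(C,-)$ and $C\otimes_R-$ restrict to mutually inverse equivalences $\B_C\leftrightarrows\A_C$ carrying $C$ to $R$ and $R$ to $C$. So the heart of the matter is part (1). For the inclusion $\res_R\Hom_R(C,\X)\subseteq\Hom_R(C,\res_C\X)$, I would first observe that $\Hom_R(C,\res_C\X)$ makes sense because $\res_C\X\subseteq\B_C$ (a $C$-resolving subcategory contains $C$ and is closed under the operations that keep one inside $\B_C$, using Proposition \ref{prop2.2}(2),(3) and the fact that $\add C\subseteq\B_C$), and that $\Hom_R(C,-)$ sends $C$ to $R$. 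The key point is that $\Hom_R(C,\res_C\X)$ is a resolving subcategory of $\mod R$: it is closed under direct summands, extensions, and kernels of epimorphisms. This follows because $\Hom_R(C,-)$ is exact on short exact sequences with all terms in $\B_C$ (the relevant $\Ext^{i}_R(C,-)$ vanish), it is additive hence preserves summands, and $\res_C\X$ is closed under the corresponding operations; one also needs that if $0\to A\to B\to C'\to 0$ is exact with $\Hom_R(C,A),\Hom_R(C,B)\in\Hom_R(C,\res_C\X)$ then $C'$ can be arranged inside $\B_C$ so that pulling back along the equivalence stays in $\res_C\X$. Since $\Hom_R(C,\X)\subseteq\Hom_R(C,\res_C\X)$ and the latter is resolving, $\res_R\Hom_R(C,\X)\subseteq\Hom_R(C,\res_C\X)$.

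For the reverse inclusion $\Hom_R(C,\res_C\X)\subseteq\res_R\Hom_R(C,\X)$, I would apply the inverse functor $C\otimes_R-$. Concretely, set $\Y=\res_R\Hom_R(C,\X)$; this is a resolving subcategory of $\mod R$ and I claim $\Y\subseteq\A_C$, so that $C\otimes_R\Y$ makes sense and is a $C$-resolving subcategory of $\mod R$ (it contains $C\otimes_R R=C$, and is closed under summands, extensions, and kernels of epimorphisms because $C\otimes_R-$ is exact on sequences in $\A_C$, additive, and $\Y$ is resolving). The containment $\Y\subseteq\A_C$ holds because $\Hom_R(C,\X)\subseteq\A_C$ by Foxby equivalence (as $\X\subseteq\B_C$), and $\A_C$ satisfies the two-out-of-three property and is closed under summands (Proposition \ref{prop2.2}(2),(3)), hence contains the resolving closure of anything inside it — wait, one must be slightly careful that $\A_C$ is itself closed under kernels of epimorphisms; this follows from the two-out-of-three property together with $R\in\A_C$, using that any module sitting in $\Y$ built from $\Hom_R(C,\X)$ and $R$ by those operations stays in $\A_C$. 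Then $C\otimes_R-$ sends $\Y$ into a $C$-resolving subcategory containing $C\otimes_R\Hom_R(C,\X)\cong\X$ (the natural iso since $\X\subseteq\B_C$), hence containing $\res_C\X$; applying $\Hom_R(C,-)$ and using the natural isomorphism $\Hom_R(C,C\otimes_R N)\cong N$ on $\A_C$ gives $\res_C\X\subseteq C\otimes_R\Y$ and therefore $\Hom_R(C,\res_C\X)\subseteq\Hom_R(C,C\otimes_R\Y)=\Y=\res_R\Hom_R(C,\X)$.

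Part (2) is then immediate: given $\Y\subseteq\A_C$, apply part (1) to $\X:=C\otimes_R\Y\subseteq\B_C$, obtaining $\Hom_R(C,\res_C(C\otimes_R\Y))=\res_R\Hom_R(C,C\otimes_R\Y)=\res_R\Y$, and apply $C\otimes_R-$ to both sides, using $C\otimes_R\Hom_R(C,-)\cong\id$ on $\B_C$ (and $\res_C(C\otimes_R\Y)\subseteq\B_C$) to conclude $\res_C(C\otimes_R\Y)=C\otimes_R\res_R\Y$.

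I expect the main obstacle to be the bookkeeping in verifying that $\Hom_R(C,\res_C\X)$ and $C\otimes_R\res_R\Y$ really are resolving/$C$-resolving subcategories — in particular, checking closure under kernels of epimorphisms, where one must produce the relevant short exact sequence, know that all three of its terms lie in $\B_C$ (resp. $\A_C$) so that the functor is exact on it, and then pull the kernel back through the equivalence into $\res_C\X$ (resp. $\res_R\Y$). Everything else is a formal consequence of the Foxby equivalence.
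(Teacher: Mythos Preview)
Your proposal is correct and follows essentially the same approach as the paper: both arguments verify that $\Hom_R(C,\res_C\X)$ is $R$-resolving (giving $\supseteq$), and for the reverse inclusion both apply $C\otimes_R-$ to a resolving subcategory sitting inside $\A_C$ to produce a $C$-resolving subcategory containing $\X$; the paper phrases this as minimality over an arbitrary resolving $\D\supseteq\Hom_R(C,\X)$ using $\D\cap\A_C$, while you take $\D=\res_R\Hom_R(C,\X)$ and note it already lies in $\A_C$, which is the same thing. One small slip to fix when you write it up: in your sentence ``if $0\to A\to B\to C'\to 0$ is exact with $\Hom_R(C,A),\Hom_R(C,B)\in\Hom_R(C,\res_C\X)$ then $C'$ can be arranged inside $\B_C$'', the modules $A,B,C'$ live in $\A_C$, not $\B_C$, and you should simply say $A,B\in\Hom_R(C,\res_C\X)$ (or whichever two are assumed); the third term then lies in $\A_C$ by the two-out-of-three property, and applying $C\otimes_R-$ pushes the sequence into $\B_C$ where $\res_C\X$ is closed under the relevant operation.
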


\begin{proof}
We prove only part (1), as part (2) is proved similarly. \par  
First of all, we shall show that if $\X$ is contained in $\B_C$, then $ \C := \Hom_R(C,\res_C \X)$ is $R$-resolving. Since $R \cong \Hom_R(C,C) \in \C$, we have $R \in \C$. Suppose $Y$ is a direct summand of $X = \Hom_R(C,X_0)$ where $X_0$ is in $\res_C \X$. Since $\res_C \X$ is contained in $\B_C$, $X$ is in $\A_C$ and so is $Y$. Moreover, $C\otimes_R Y$ is a direct summand of $C\otimes_R X \cong X_0$. Hence $C\otimes_R Y \in \res_C \X$. 
Therefore we have  $Y \cong \Hom_R(C,C\otimes_R Y) \in \C$. 
This implies that $\C$ is closed under direct summands. Next, assume that $0\to X \to Y \to Z \to 0 $ is an exact sequence of $\mod R$ with $X=\Hom_R(C,X_0)$ , $X_0 \in \res_C \X $ and $ Z=\Hom_R(C,Z_0)$ , $Z_0 \in \res_C \X$. Note that the exact sequence above is in $\A_C$. Applying the functor $C\otimes_R -$, we have the exact sequence $0\to X_0 \to C\otimes_R Y \to Z_0 \to 0 $. Hence $C\otimes_R Y  \in \res_C \X$ and we have $Y \cong \Hom_R(C,C\otimes_R Y) \in \C $. This implies that $\C$ is closed under extensions. Similarly, we can prove that $\C$ is closed under kernels of epimorphisms and $\C$ is $R$-resolving. Dually, if $\Y$ is contained in $\A_C$, then $C\otimes_R \res_R \Y$ is $C$-resolving. \par
Finally, we shall show the minimality of $\Hom_R(C,\res_C \X) $. Suppose $\D$ is an $R$-resolving subcategory of $\mod R$ containing $\Hom_R(C,\X)$. Since $\Hom_R(C,\X)$ is contained in $\A_C$, we have $\Hom_R(C,\X) \subseteq \D \cap \A_C$. Hence we have $\X = C\otimes_R \Hom_R(C,\X) \subseteq C\otimes_R (\D \cap \A_C)$. As $\D \cap \A_C $ is an $R$- resolving subcategory contained in $\A_C$, $C\otimes_R (\D \cap \A_C)$ is $C$-resolving by the consequence of previous arguments. Therefore $\res_C \X \subseteq  C\otimes_R (\D \cap \A_C)$. Hence we have $\Hom_R(C,\res_C \X) \subseteq \Hom_R (C,C\otimes_R (\D \cap \A_C))= \D \cap \A_C \subseteq \D$. This shows that $\Hom_R(C,\res_C \X)$ is the smallest $R$-resolving subcategory of $\mod R$ containing $\Hom_R(C,\X)$. 
\end{proof}

The semidualizing version of the proof of \cite[Proposition 4.9]{Dao Takahashi} can be separated into several propositions as below. Some of them follow from the consequence of \cite {Dao Takahashi} and Foxby equivalence.

For a fixed semidualizing module $C$, we denote by $\nf_{C} (M)$ the subset of $\spec R$ consisting of prime ideals $\p$ such that  $M_{\p} \notin \add_{R_{\p}}C_{\p}$.

\begin{prop} \label{prop3.2}
Let $C$ be a semidualizing $R$-module and $M$ an $R$-module. Then there is an equality $\nf_C(M)=\nf_R(\Hom_R(C,M))$.
\end{prop}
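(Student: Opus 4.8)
The plan is to reduce the whole statement to a claim about $C$-projective dimension over the local ring $R_\p$ and then invoke Proposition \ref{prop2.3}. Fix a prime $\p$ of $R$. Since $C$ and $M$ are finitely generated over a noetherian ring, $C$ is finitely presented, so $\Hom$ commutes with this localization: there is a natural isomorphism $\Hom_R(C,M)_\p \cong \Hom_{R_\p}(C_\p,M_\p)$. Moreover $R\to R_\p$ is flat, so by Proposition \ref{prop2.1}(2) the module $C_\p$ is semidualizing over $R_\p$, and applying Proposition \ref{prop2.3} over $R_\p$ gives
\[
\pd_{C_\p} M_\p \;=\; \pd_{R_\p}\Hom_{R_\p}(C_\p,M_\p) \;=\; \pd_{R_\p}\bigl(\Hom_R(C,M)_\p\bigr).
\]

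Next I would translate each of the two membership conditions into the vanishing (in the extended sense) of one of these quantities. On one side, by the Remark noting that $\add_R C$ consists of the modules $C^{\oplus n}$, and since $R_\p$ is local with $C_\p$ semidualizing, one has $M_\p\in\add_{R_\p} C_\p$ if and only if $M_\p\cong C_\p^{\oplus n}$ for some $n\ge 0$, which happens precisely when $\pd_{C_\p} M_\p\le 0$; here the case $n=0$ is $M_\p=0$, i.e.\ $\pd_{C_\p}M_\p=-\infty$, and $n\ge 1$ is $\pd_{C_\p}M_\p=0$. On the other side, $\Hom_R(C,M)_\p$ is a free $R_\p$-module if and only if $\pd_{R_\p}\bigl(\Hom_R(C,M)_\p\bigr)\le 0$.

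Combining these two equivalences with the displayed equality shows that, for every prime $\p$ of $R$, the module $M_\p$ lies in $\add_{R_\p} C_\p$ if and only if $\Hom_R(C,M)_\p$ is free over $R_\p$. Taking complements in $\spec R$ yields $\nf_C(M)=\nf_R(\Hom_R(C,M))$, as claimed.

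As for obstacles: there is essentially no hard step here — the content is bookkeeping around Foxby-type compatibilities. The only points that need care are the commutation of $\Hom$ with localization (which is why finite generation of $C$ matters) and the handling of the zero module under the convention $\pd_C 0=-\infty$ so that the degenerate case $n=0$ is not lost; neither constitutes a genuine difficulty.
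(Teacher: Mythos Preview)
Your argument is correct and is essentially the same as the paper's own proof: both localize at a prime $\p$, use Proposition~\ref{prop2.3} over $R_\p$ together with the compatibility $\Hom_R(C,M)_\p\cong\Hom_{R_\p}(C_\p,M_\p)$ to get $\pd_{C_\p}M_\p=\pd_{R_\p}\Hom_R(C,M)_\p$, and then translate membership in the nonfree loci into the condition $\pd\le 0$. The only difference is that you spell out the justifications (finite presentation of $C$, Proposition~\ref{prop2.1}(2), the zero-module convention) that the paper leaves implicit.
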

\begin{proof} For any prime ideal $\p$ of $R$, one has $\pd_{C_{\p}} M_{\p} = \pd_{R_{\p}} \Hom_{R_{\p}}(C_{\p},M_{\p}) = \pd_{R_{\p}}\Hom_R(C,M)_{\p} $ where the first equality follows from \ref{prop2.3}. Hence we have $\p \notin \nf_C(M) $ if and only if $\pd_{C_{\p}} M_{\p} \leq 0$, if and only if $\pd_{R_{\p}}\Hom_R(C,M)_{\p} \leq 0$, if and only if $\p \notin \nf_R(\Hom_R(C,M)) $.
\end{proof}

\begin{prop} \label{prop3.3}
Let $C$ be a semidualizing $R$-module and $M$ an $R$-module with $0<\pd_C M <\infty $. Then there exists $N\in \res_C M$ satisfying $\nf_C(N)= \lbrace \m \rbrace $ and $\pd_C N =\pd_C M $. 
\end{prop}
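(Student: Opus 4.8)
The plan is to reduce the statement, via the Foxby equivalence and Proposition~\ref{prop2.3}, to the classical fact that for a module of positive finite projective dimension one can find a module in its resolving closure whose non-free locus is exactly $\{\m\}$ and whose projective dimension is unchanged. Concretely, set $L = \Hom_R(C,M)$. Since $0<\pd_C M<\infty$, Proposition~\ref{prop2.3} gives $0<\pd_R L<\infty$, so in particular $M\in\B_C$ and $L\in\A_C$. The corresponding statement for ordinary projective dimension is essentially \cite[Proposition 4.9]{Dao Takahashi} (or the standard prime-filtration/prime-avoidance argument behind it): there exists $N_0\in\res_R L$ with $\nf_R(N_0)=\{\m\}$ and $\pd_R N_0 = \pd_R L$. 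I would either cite this or reprove it quickly by taking a module $X$ with $\m\in\nf_R(L)$, localizing away from $\m$ to make the non-free locus shrink, and using prime avoidance together with the fact that syzygy operations and extensions keep one inside $\res_R L$ while controlling $\pd$.

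Next I would transport $N_0$ back through $C\otimes_R -$. Put $N = C\otimes_R N_0$. Since $N_0\in\res_R L = \res_R\Hom_R(C,\X)$ with $\X=\{M\}\subseteq\B_C$, Proposition~\ref{prop3.1}(2) (with $\Y=\{L\}\subseteq\A_C$) yields $N = C\otimes_R N_0 \in C\otimes_R\res_R L = \res_C(C\otimes_R L) = \res_C M$, using the natural isomorphism $C\otimes_R\Hom_R(C,M)\cong M$ from $M\in\B_C$. Moreover $\Hom_R(C,N)\cong\Hom_R(C,C\otimes_R N_0)\cong N_0$ because $N_0\in\A_C$. Hence by Proposition~\ref{prop3.2}, $\nf_C(N) = \nf_R(\Hom_R(C,N)) = \nf_R(N_0) = \{\m\}$, and by Proposition~\ref{prop2.3}, $\pd_C N = \pd_R\Hom_R(C,N) = \pd_R N_0 = \pd_R L = \pd_C M$. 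This gives both required properties.

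The only genuine content is the classical input about ordinary resolving closures, so the main obstacle is making sure the cited or reproved statement for $\pd_R$ says exactly what is needed: a \emph{single} module $N_0\in\res_R L$ whose non-free locus is precisely the maximal ideal (not merely contained in it) and whose projective dimension equals that of $L$ (neither dropping to $0$ nor growing). The "non-free locus equals $\{\m\}$" part forces $N_0$ to be genuinely non-free, which is where one must be careful: one should start from a module already witnessing $\m\in\nf_R(L)$—for instance a high syzygy or $L$ itself if $\nf_R(L)=\{\m\}$ already—and then inductively kill the other primes in the non-free locus by forming suitable extensions/pushouts along elements avoiding $\m$, exactly as in \cite[Proposition 4.9]{Dao Takahashi}. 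Everything after that is bookkeeping with the Foxby equivalence, which the preceding propositions have already packaged for us.
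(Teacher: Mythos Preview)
Your proposal is correct and follows essentially the same route as the paper: set $L=\Hom_R(C,M)$, invoke \cite[Lemma 4.6 and proof of Proposition 4.9]{Dao Takahashi} to obtain $N_0\in\res_R L$ with $\nf_R(N_0)=\{\m\}$ and $\pd_R N_0=\pd_R L$, and transport back via the Foxby equivalence using Propositions~\ref{prop3.2} and~\ref{prop2.3}. The only cosmetic difference is that the paper uses Proposition~\ref{prop3.1}(1) to write $N_0=\Hom_R(C,N)$ for some $N\in\res_C M$, whereas you set $N=C\otimes_R N_0$ and invoke Proposition~\ref{prop3.1}(2); these are equivalent formulations of the same transfer.
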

\begin{proof}
Since $0<\pd_R \Hom_R(C,M) <\infty$, there exists $N_0 \in \res_R  \Hom_R (C,M) =  \Hom_R (C,\res_C M )$ such that $\pd_R N_0 = \pd_R \Hom_R(C,M)$, and $\nf_R(N_0)=\lbrace\m\rbrace$ by \cite[Lemma 4.6 and proof of Proposition 4.9]{Dao Takahashi}. Write $N_0 = \Hom_R(C,N)$ with $N\in \res_C M$, then we have $\nf_C(N)=\nf_R(\Hom_R(C,N))=\nf_R(N_0)=\lbrace\m\rbrace$, and $\pd_C N=\pd_R \Hom_R(C,N) =\pd_R N_0 =\pd_R \Hom_R(C,M) =\pd_C M$.
\end{proof}

\begin{prop} \label{prop3.4}
Let $C$ be a semidualizing $R$-module and $M$ an $R$-module. Then the following hold.\\
$(1)$ If M belongs to $\B_C$, then $\Ext^i_R(M,C) \cong \Ext^i_R(\Hom_R(C,M),R)$ for all $i \geq 0$. \\
$(2)$ If $n = \pd_C M $ is finite, then $\Ext^n_R(M,C)\neq 0 $.
\end{prop}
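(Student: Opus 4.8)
The plan is to reduce everything to the ordinary projective case via the Foxby equivalence of Proposition~\ref{prop2.2}. Write $N=\Hom_R(C,M)$. For part~(1), since $M\in\B_C$ we have $N\in\A_C$, $C\otimes_R N\cong M$, and $\Tor^R_i(C,N)=0$ for all $i>0$. Take a free resolution $P_\bullet\to N$ (finitely generated, as $R$ is noetherian) and apply $C\otimes_R-$: the vanishing of $\Tor$ shows that $C\otimes_R P_\bullet\to M$ is again a resolution, now by modules of the form $C^{\oplus r}\in\add_R C$.

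Next I would check that this $(\add_R C)$-resolution computes $\Ext_R^i(M,C)$. Each term $C^{\oplus r}$ satisfies $\Ext_R^j(C^{\oplus r},C)\cong\Ext_R^j(C,C)^{\oplus r}=0$ for $j>0$, so it is $\Hom_R(-,C)$-acyclic, and a resolution by such modules computes $\Ext_R^i(M,C)$ by the usual comparison argument. It then remains to identify the complex $\Hom_R(C\otimes_R P_\bullet,C)$: by the tensor--hom adjunction together with $\Hom_R(C,C)\cong R$ there are natural isomorphisms $\Hom_R(C\otimes_R P_i,C)\cong\Hom_R(P_i,\Hom_R(C,C))\cong\Hom_R(P_i,R)$, compatible with the differentials, so $\Hom_R(C\otimes_R P_\bullet,C)\cong\Hom_R(P_\bullet,R)$ as complexes. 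Taking cohomology yields $\Ext_R^i(M,C)\cong\Ext_R^i(N,R)=\Ext_R^i(\Hom_R(C,M),R)$ for all $i\ge0$.

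For part~(2), note first that finite $C$-projective dimension forces $M\in\B_C$ (as recorded after Proposition~\ref{prop2.2}), so part~(1) applies. By Proposition~\ref{prop2.3}, $\pd_R N=\pd_C M=n<\infty$. Choosing a minimal free resolution $0\to F_n\to\cdots\to F_0\to N\to0$, the module $\Ext_R^n(N,R)$ is the cokernel of the $R$-dual map $F_{n-1}^\ast\to F_n^\ast$; minimality makes this map have entries in $\m$, while $F_n\ne0$ because $\pd_R N=n$. Hence $\Ext_R^n(N,R)$ surjects onto $F_n^\ast/\m F_n^\ast\ne0$, so $\Ext_R^n(N,R)\ne0$ by Nakayama, and therefore $\Ext_R^n(M,C)\cong\Ext_R^n(N,R)\ne0$ by part~(1).

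The only genuinely delicate point is the second paragraph: one must confirm that the chosen $(\add_R C)$-resolution of $M$ really does compute $\Ext_R^\ast(M,C)$, i.e.\ that replacing free modules by $\Hom_R(-,C)$-acyclic ones changes nothing, and that the adjunction isomorphisms are natural enough to assemble into an isomorphism of complexes. Both are standard homological algebra, but that is where the care lies; the rest is a direct application of the Foxby equivalence and Proposition~\ref{prop2.3}.
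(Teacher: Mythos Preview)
Your proof is correct and follows essentially the same route as the paper. The paper compresses your argument into a single chain of isomorphisms
\[
\Ext^i_R(\Hom_R(C,M),R)\cong\Ext^i_R(\Hom_R(C,M),\Hom_R(C,C))\cong\Ext^i_R(C\otimes_R\Hom_R(C,M),C)\cong\Ext^i_R(M,C),
\]
where the middle isomorphism is the derived Hom--tensor adjunction (valid because $\Tor^R_{>0}(C,\Hom_R(C,M))=0$) and the last uses $C\otimes_R\Hom_R(C,M)\cong M$; your explicit construction of the $(\add_R C)$-resolution and the check that it computes $\Ext^\ast_R(M,C)$ is precisely what justifies that middle step, so the two arguments are the same in substance. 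For part~(2) the paper likewise just invokes the standard fact that $\pd_R N=\sup\{i:\Ext^i_R(N,R)\ne0\}$, which you spell out via the minimal resolution and Nakayama.
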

\begin{proof}
(1) For any integer $i \geq 0$, we have the following isomorphisms: $\Ext^i_R(\Hom_R(C,M),R) \cong \Ext^i_R(\Hom_R(C,M),\Hom_R(C,C)) \cong \Ext^i_R(C\otimes_R \Hom_R(C,M),C) \cong \Ext^i_R(M,C) $.\\
(2) Since  the projective  dimension of $\Hom_R (C,M)$ is finite, it equals to the supremum of non-negative integer $n$ such that $\Ext^n_R(\Hom_R(C,M),R) \neq 0$.
\end{proof}

\begin{prop} \label{prop3.5}
Let $C$ be a semidualizing $R$-module and $M$ an $R$-module. Suppose that $\Ext^1_R(M,C)$ is a nonzero $R$-module of finite length. Then there exists $N\in \res_C M $ satisfying $1\leq \pd_C N \leq \pd_C M$, and $\Ext^1_R(N,C) \cong k$.
\end{prop}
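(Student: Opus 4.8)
The plan is to induct on the length $\ell$ of the $R$-module $\Ext^1_R(M,C)$, which is finite by hypothesis, peeling off one composition factor at each step. The engine is the usual description of $\Ext^1$ by extensions. Given a class $\xi\in\Ext^1_R(M,C)$, represented by a short exact sequence $0\to C\to E\to M\to 0$, the module $E$ lies in $\res_C M$: this subcategory contains $C$ and $M$ and is closed under extensions, so in fact $\res_C E\subseteq\res_C M$. Applying $\Hom_R(-,C)$ and using that $C$ is semidualizing, so $\Ext^1_R(C,C)=0$, gives an exact sequence $\Hom_R(C,C)\xrightarrow{\partial}\Ext^1_R(M,C)\to\Ext^1_R(E,C)\to 0$ in which $\partial$ is $R$-linear and carries $\id_C$ to $\xi$; hence $\Ext^1_R(E,C)\cong\Ext^1_R(M,C)/R\xi$. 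Thus passing from $M$ to such an $E$ kills exactly the cyclic submodule generated by the chosen class, while keeping us inside $\res_C M$.

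For the inductive step I would assume $L:=\Ext^1_R(M,C)$ has length $\ell\ge 2$, pick a simple submodule $S$ of $L$ (say inside its socle) --- necessarily proper, and equal to $R\xi$ for any nonzero $\xi\in S$ --- and form the corresponding extension $0\to C\to E\to M\to 0$. Then $E\in\res_C M$, and $\Ext^1_R(E,C)\cong L/S$ is a nonzero $R$-module of length $\ell-1$. The key remaining point is $\pd_C E\le\pd_C M$: applying $\Hom_R(C,-)$ to $0\to C\to E\to M\to 0$ and using $\Ext^1_R(C,C)=0$ produces a short exact sequence $0\to R\to\Hom_R(C,E)\to\Hom_R(C,M)\to 0$, whence $\pd_R\Hom_R(C,E)\le\max\{0,\pd_R\Hom_R(C,M)\}=\pd_C M$ (using $M\ne 0$), and then $\pd_C E\le\pd_C M$ by Proposition \ref{prop2.3}. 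The induction hypothesis applied to $E$ now yields $N\in\res_C E\subseteq\res_C M$ with $\Ext^1_R(N,C)\cong k$ and $1\le\pd_C N\le\pd_C E\le\pd_C M$, which is what we want.

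The base case $\ell=1$ is immediate: then $L$ is simple, hence $L\cong k$, and $N:=M$ works, the bound $\pd_C M\ge 1$ holding because $\Ext^1_R(M,C)\ne 0$ excludes both $M=0$ and $M\in\add_R C$. The step I expect to require the most care is the inequality $\pd_C E\le\pd_C M$: it must be obtained without presuming $M\in\B_C$, which is why I would argue through $\Hom_R(C,-)$ and Proposition \ref{prop2.3} rather than directly with $C$-resolutions. (When $\pd_C M<\infty$ one has $M\in\B_C$ and the whole statement can alternatively be deduced from its $C=R$ instance via Foxby equivalence: $\Ext^1_R(M,C)\cong\Ext^1_R(\Hom_R(C,M),R)$ by Proposition \ref{prop3.4}(1), and one transports the module obtained over $R$ back through the identity $\res_R\Hom_R(C,M)=\Hom_R(C,\res_C M)$ of Proposition \ref{prop3.1}(1); the inductive argument above, however, needs no finiteness hypothesis.)
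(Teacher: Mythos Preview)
Your proof is correct and follows essentially the same approach as the paper: pick a nonzero socle element of $\Ext^1_R(M,C)$, realize it as an extension $0\to C\to N_1\to M\to 0$, observe $N_1\in\res_C M$, use $\Ext^1_R(C,C)=0$ to see that the length of $\Ext^1_R(-,C)$ drops by one and that $\pd_C$ does not increase, and iterate (the paper phrases this as repeating the construction rather than formal induction, but the content is identical). Your parenthetical remark about the Foxby-equivalence shortcut when $\pd_C M<\infty$ also matches the paper's own remark following the proof.
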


\begin{proof}
If $\l(\Ext^1_R(M,C))= 1$, then we can take $N:=M$. Suppose $\l(\Ext^1_R(M,C))> 1$. 
By hypothesis, we can choose a socle element $0\neq\sigma\in\Ext^1_R(M,C)$. It can be represented as a short exact sequence
\begin{center}
$\sigma:0 \to C \to N_1 \to M \to 0$. \\
\end{center}
Then $N_1\in\res_C M $. Since $\Ext^1_R(C,C)=0$ and $M\neq0$,  we have $\pd_C N_1 \leq\pd_C M $. Applying the functor $\Hom_R(-,C)$, we get an exact sequence 
\begin{center}
$\Hom_R(C,C) \xrightarrow{f} \Ext^1_R(M,C) \to \Ext^1_R(N_1,C) \to 0$ 
\end{center}
where $f$ sends $\id_C$ to $\sigma\in \Ext^1_R(M,C)$. Hence we obtain an exact sequence 
\begin{center}
$0\to k \to \Ext^1_R(M,C) \to \Ext^1_R(N_1,C) \to 0$. \\
\end{center}
This implies that $\l(\Ext^1_R(N_1,C))= \l(\Ext^1_R(M,C))- 1$. Replacing $M$ by $N_1$ and repeating this process, we get a family of $R$-modules $\lbrace N_i\rbrace^l_{i=1}$ such that $l=\l(\Ext^1_R(M,C))$ , $\pd_C N_{i+1} \leq \pd_C N_i $, $N_{i+1}\in \res_C N_i $, and $\l(\Ext^1_R(N_i,C))=l-i$ for all $i>0$. Hence we can take $N=N_{l-1}$.
\end{proof}

\begin{rmk}
If $M$ belongs to $\B_C$, the assertion follows from the consequence of \cite[Proof of Proposition 4.9]{Dao Takahashi} and Foxby equivalence as follows. Since $\Ext^1 _R (\Hom_R (C,M),R) \cong \Ext^1 _R (M,C)$ is nonzero $R$-module of finite length, there exists $N_0 \in \res_R \Hom_R(C,M) = \Hom_R(C,\res_C M)$ satisfying $\pd_R N_0 \leq \pd_R \Hom_R(C,M)$ and $\Ext^1_R(N_0,R) \cong k$ by \cite[Proof of Proposition 4.9]{Dao Takahashi}. Write $N_0 = \Hom_R(C,N)$ with $N\in \res_C M$, then we have $\pd_C N = \pd_R N_0 \leq \pd_C M $ and $\Ext^1_R(N,C) \cong \Ext^1 _R(N_0, R) \cong k$. 
\end{rmk}

\begin{prop} \label{prop3.6}
Let $C$ be a semidualizing $R$-module and $M$ an $R$-module satisfying $\pd_C(M)=1$, and $\Ext^1_R(M,C)\cong k$. Then for any $R$-module $L$ of finite length, $\tr_C L$ belongs to $\res_C M $.
\end{prop}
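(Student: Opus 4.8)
The plan is to transfer the statement to the base ring $R$ via the Foxby equivalence and then appeal to the corresponding fact in \cite{Dao Takahashi}. Since $\pd_C M=1<\infty$, the module $M$ lies in $\B_C$; put $M_0:=\Hom_R(C,M)$, so that $M_0\in\A_C$ and $C\otimes_R M_0\cong M$ by the Foxby equivalence \ref{prop2.2}. By \ref{prop2.3} one has $\pd_R M_0=\pd_C M=1$, and by \ref{prop3.4}(1) one has $\Ext^1_R(M_0,R)\cong\Ext^1_R(M,C)\cong k$. Applying \ref{prop3.1}(2) to the subcategory consisting of $M_0$ gives $\res_C M=\res_C(C\otimes_R M_0)=C\otimes_R\res_R M_0$. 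Since moreover $\tr_C L\cong C\otimes_R\tr_R L$ for every finitely generated $R$-module $L$, it is enough to prove the following statement over $R$: \emph{if $N$ is an $R$-module with $\pd_R N=1$ and $\Ext^1_R(N,R)\cong k$, then $\tr_R L\in\res_R N$ for every $R$-module $L$ of finite length}. Indeed, granting this with $N:=M_0$, one gets $\tr_C L=C\otimes_R\tr_R L\in C\otimes_R\res_R M_0=\res_C M$.

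I would next note that this $R$-statement is essentially the content of \cite[Proof of Proposition 4.9]{Dao Takahashi}. From $\pd_R N=1$ one computes $\tr_R N\cong\Ext^1_R(N,R)\cong k$, so $N$ agrees with $\tr_R k$ up to free summands and therefore $\res_R N=\res_R(\tr_R k)$; and the inclusion $\tr_R L\in\res_R(\tr_R k)$ for every finite length $L$ is what is shown there. In the present notation the base case $\ell(L)=1$ is in fact immediate: the computation above yields $M_0\cong\tr_R k\oplus R^{\,c}$ for some $c\ge 0$, hence $M\cong\tr_C k\oplus C^{\,c}$, so $\tr_C k$ is a direct summand of $M\in\res_C M$ and thus belongs to $\res_C M$. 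For $\ell(L)>1$ one would pick a short exact sequence $0\to L'\to L\to k\to 0$ with $\ell(L')=\ell(L)-1$, run the six-term exact sequence relating $\tr_R L$, $\tr_R L'$, $\tr_R k$ and certain $R$-dual and $\Ext^1_R(-,R)$ terms obtained by $R$-dualizing a horseshoe resolution of $L$, and invoke the inductive hypothesis.

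The step I expect to be the genuine obstacle is precisely this last one: the correction terms that appear when comparing the transposes along $0\to L'\to L\to k\to 0$ are of finite length, yet the inductive hypothesis only controls transposes of finite length modules, so the induction must be set up so that these correction terms are themselves, up to free summands, transposes of finite length modules. This bookkeeping is exactly what is done in \cite{Dao Takahashi}; hence, for the present paper, the cleanest path is the Foxby reduction above followed by a citation of that result.
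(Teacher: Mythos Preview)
Your proposal is correct and follows essentially the same route as the paper: reduce to the $R$-statement via Foxby equivalence (using \ref{prop2.3}, \ref{prop3.4}(1), and \ref{prop3.1}) and then cite \cite[Proof of Proposition 4.9]{Dao Takahashi} for the fact that $\tr_R L\in\res_R\Hom_R(C,M)$. The only cosmetic difference is that the paper invokes \ref{prop3.1}(1) to identify $\res_R\Hom_R(C,M)=\Hom_R(C,\res_C M)$ and then applies $C\otimes_R-$, whereas you invoke the dual statement \ref{prop3.1}(2); your additional sketch of the Dao--Takahashi induction is extra detail the paper omits.
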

\begin{proof}
Since $M$ has finite $C$-projective dimension, it belongs to $\B_C$. Hence by \ref{prop3.4}, there are isomorphisms $\Ext^1_R(\Hom_R(C,M),R) \cong \Ext^1_R(M,C) \cong k$. Now $\Hom_R(C,M)$ has projective dimension one, and by \cite [Proof of Proposition 4.9]{Dao Takahashi}, we observe that $\tr_R L$ is in $\res_R  \Hom_R(C,M)= \Hom_R(C,\res_C M)$. Applying the functor $C\otimes_R -$, we see that $\tr_C L$ belongs to $\res_C M$.
\end{proof}

The next proposition plays an essential role in the proof of the divergence of the radius of a subcategory and its techniques of proof are based on \cite[Proof of Proposition 4.9]{Dao Takahashi}. 
\begin{prop} \label{prop3.7}
Let $C$ be a semidualizing $R$-module and $\X$ a subcategory of $\mod R$. Suppose that $\tr_C(R/\m^i) \in \X$ for all $i>0$, and $\depth C>0 $. Then $\radius \X = \infty$.
\end{prop}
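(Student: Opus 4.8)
The plan is to argue by contradiction, following the pattern of the proof of \cite[Proposition~4.9]{Dao Takahashi}. Suppose $\radius\X<\infty$; then there are $G\in\mod R$ and $n\ge0$ with $\X\subseteq[G]_{n+1}$, so $T_i:=\tr_C(R/\m^i)\in[G]_{n+1}$ for every $i>0$. I would first record structural facts about $T_i$, all using $\depth C=\depth R>0$ (Proposition~\ref{prop2.1}(3)). Since $(0:_R\m^i)=\Hom_R(R/\m^i,R)=0$, dualizing a minimal presentation $R^{\mu_i}\to R\to R/\m^i\to0$ yields an exact sequence $0\to R\to R^{\mu_i}\to\tr_R(R/\m^i)\to0$ with left map a column of generators of $\m^i$; hence $\pd_R\tr_R(R/\m^i)\le1$, so $\tr_R(R/\m^i)\in\A_C$ (Proposition~\ref{prop2.2}(3)), and applying $C\otimes_R-$ (exact, as $\Tor_{>0}$ vanishes on $\A_C$) gives $T_i\in\B_C$, an exact sequence $0\to C\xrightarrow{\,g\,}C^{\oplus\mu_i}\to T_i\to0$ with $g$ multiplication by a column of generators of $\m^i$, and $\Hom_R(C,T_i)\cong\tr_R(R/\m^i)$ by the Foxby equivalence. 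Dualizing the latter sequence by $\Hom_R(-,C)$, using $\Ext^j_R(C,C)=0$ for $j>0$ and biduality on $\add_RC$, gives $\Ext^1_R(T_i,C)\cong R/\m^i$. Most importantly, applying local cohomology to the same sequence and using $H^j_\m(C)=0$ for $j<\depth C$ gives $H^j_\m(T_i)=0$ for $j<\depth C-1$ and
\[
H^{\depth C-1}_\m(T_i)\;\cong\;\ker\!\bigl(H^{\depth C}_\m(C)\xrightarrow{\,g\,}H^{\depth C}_\m(C)^{\oplus\mu_i}\bigr)\;=\;\bigl(0:_{H^{\depth C}_\m(C)}\m^i\bigr),
\]
a module of finite length (a submodule of the Artinian module $H^{\depth C}_\m(C)$ killed by $\m^i$) whose Loewy length equals $i$ precisely when $H^{\depth C}_\m(C)$ is not of finite length (e.g.\ whenever $R$ is Cohen--Macaulay, when $H^{\depth C}_\m(C)$ is the top local cohomology of a module of positive dimension).

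Next I would use the following bound, which is the substance of \cite{Dao Takahashi} in this context: for $H\in\mod R$ and $m\ge1$, putting $c$ equal to the Loewy length of $H^0_\m(H)$, every $N\in[H]_m$ has $H^0_\m(N)$ of Loewy length at most $mc$. This is proved by induction on $m$, using $\depth R\ge1$: $H^0_\m$ vanishes on free modules and on nonzero syzygies (which have positive depth), and for a short exact sequence $0\to A\to W\to B\to0$ left exactness of $H^0_\m$ gives $\operatorname{ll}H^0_\m(W)\le\operatorname{ll}H^0_\m(A)+\operatorname{ll}H^0_\m(B)$; writing $[H]_m=[[H]_{m-1}\circ[H]_1]$ and chasing $H^0_\m$ through the defining extensions gives the estimate. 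No Krull--Schmidt is needed, since $H^0_\m(N)\hookrightarrow H^0_\m(P)$ whenever $N$ is a direct summand of $P$. More generally, after reducing modulo a suitable regular sequence one obtains in the same way a bound on the Loewy length of the finite-length part of $H^{\depth C-1}_\m(N)$ for $N\in[G]_{n+1}$; that reduction is the crux, and is discussed below.

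When $\depth C=1$ and $H^1_\m(C)$ is not of finite length the argument closes at once: the first paragraph gives that $H^0_\m(T_i)$ has Loewy length $i$, while the lemma gives Loewy length at most $(n+1)$ times that of $H^0_\m(G)$, which is impossible for $i\gg0$, so $\radius\X=\infty$. In general one must first cut down. One chooses a sequence of elements that is simultaneously regular on $R$, on $C$ and on every $T_i$ — possible since the associated primes of the $T_i$ other than $\m$ all lie among the minimal primes of $R$ (the connecting map $g$ is split at each prime $\ne\m$, so $T_i$ is locally free over $C$ on the punctured spectrum), so a single such sequence exists by prime avoidance — passes to the quotient, where $C$ remains semidualizing of depth one smaller and $T_i$ becomes, up to non-minimality, the analogue of $\tr_C(R/\m^i)$, checks that reduction carries $[G]_{n+1}$ into a ball of radius $n+1$ over the quotient, and iterates until the depth drops to $1$. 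The step I expect to be the main obstacle is exactly this reduction: controlling uniformly in $i$ the Koszul/$\Tor$ correction terms that appear when the ball-building extensions are reduced modulo the regular sequence, and reconciling the $C$-twisted transpose with the ordinary-syzygy balls — the point that Remark~\ref{rmk3.1} flags as non-automatic. The identities $T_i\in\B_C$, $\Hom_R(C,T_i)\cong\tr_R(R/\m^i)$ and $\pd_R\tr_R(R/\m^i)\le1$ are what should make the reduction go through, bringing it back to the untwisted situation handled by Dao and Takahashi.
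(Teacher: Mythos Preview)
Your approach is genuinely different from the paper's, and the gap you yourself flag is real and, as far as I can see, not easily closed. The reduction modulo a regular sequence is the crux: balls $[G]_{n+1}$ are built from $R$-syzygies, and $\Omega_R^j(-)\otimes_R R/(x)$ is not $\Omega_{R/(x)}^j(-\otimes_R R/(x))$; the Koszul/Tor defects you mention are not uniform in $i$ in any obvious way, and there is no mechanism in your sketch that forces $[G]_{n+1}\otimes_R R/(x)$ back into a ball of bounded radius over $R/(x)$. A second, smaller issue is the base case: when $\depth C=1$ your argument needs $H^1_\m(C)$ to have infinite Loewy length, but the proposition does not assume $R$ is Cohen--Macaulay, and for $\dim R>\depth R=1$ the module $H^1_\m(C)$ can be of finite length. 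Your reduction does not guarantee that the final quotient is Cohen--Macaulay, so this case is not covered.

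The paper avoids both difficulties by a completely different route. It passes to the completion and then, via Cohen's structure theorem, to a Gorenstein surjection $S\twoheadrightarrow\widehat R$ with $\dim S=\dim\widehat R$. The containment $\tr_{\widehat C}(\widehat R/\n^i\widehat R)\in[\widehat G]_{r+1}^{\widehat R}$ is then fed into the annihilator inequality of \cite[Lemma~4.7]{Dao Takahashi}, which bounds $\bigcap_j\Ann_S\Ext^j_S(-,S)$ uniformly over the ball. Dualizing the presentation $0\to\widehat C\to\widehat C^{\oplus m}\to\tr_{\widehat C}(\widehat R/\n^i\widehat R)\to0$ into $S$ (here $\depth C>0$ is used only to kill $\Hom_{\widehat R}(\widehat R/\n^i\widehat R,\widehat C)$) yields an injection $\Hom_S(\widehat C,S)/\n^i\Hom_S(\widehat C,S)\hookrightarrow\Ext^1_S(\tr_{\widehat C}(\widehat R/\n^i\widehat R),S)$, whence, intersecting over $i$, the ideal $\Ann_S\Hom_S(\widehat C,S)$ contains a fixed product of $\Ann_S\Ext^j_S(\widehat R,S)$ and $\Ann_S\Ext^j_S(\widehat G,S)$. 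Localizing at a minimal prime $\p$ of $S$ containing $\ker(S\to\widehat R)$ (such $\p$ exists since $\height\ker=0$) gives a contradiction: $S_\p$ is artinian Gorenstein, so all the $\Ext^j_{S_\p}$ on the right vanish, while $\Hom_{S_\p}(\widehat C_\p,S_\p)\ne0$ since $\widehat C_\p\ne0$. No depth reduction, no Loewy-length bookkeeping, and no uniformity-in-$i$ issues arise, because the annihilator bound from \cite{Dao Takahashi} is already uniform over the ball.
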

\begin{proof}
Assume that we have $\radius\X = r < \infty $. We want to deduce a contradiction. There is a ball $\lbrack G \rbrack^R _{r+1}$ that contains it. Since $\X$ contains $\tr_C(R/\m^i)$ for all $i>0$, the ball $\lbrack G \rbrack^R _{r+1}$ also contains it. Taking the completions, we have $\tr_{\widehat{C}} (\widehat{R}/\m^{i}\widehat{R})\in \lbrack \widehat{G} \rbrack^{\widehat{R}} _{r+1} $ for all $i>0$. By virtue of Cohen's structure theorem, there exists a surjective homomorphism $S\to \widehat{R}$ such that $S$ is Gorenstein local ring with $\dim S = \dim \widehat R =: d$. Let $\n$ denote the maximal ideal of $S$ and note that we have $\widehat{R}/\m^{i}\widehat{R}=\widehat{R}/\n^{i}\widehat{R} $ for $i>0$. By \cite[Lemma 4.7]{Dao Takahashi}, there is an inclusion relation 
\begin{center}
$\displaystyle\bigcap_{j>0}\Ann_S\Ext^j _S (\tr_{\widehat{C}} (\widehat{R}/\n^{i}\widehat{R}),S)\supseteq \left( \displaystyle\prod_{j=1}^{d}\Ann_S\Ext^{j}_{S}(\widehat{R},S)\cdot \Ann_S\Ext^{j}_{S}(\widehat{G},S) \right) ^{r+1}$.
\end{center} \noindent
Fix an integer $i>0$ and let $a_1, \ldots, a_m$ be a system of generators of the ideal $\n^i$ of $S$. There is an exact sequence $\widehat{R}^{\oplus m} \xrightarrow{(a_1, \ldots, a_m)} \widehat{R} \to \widehat{R}/\n^{i}\widehat{R} \to 0 $ of $\widehat{R}$-modules. Dualizing this by $\widehat{C}$ induces an exact sequence
\begin{center}
$0=\Hom_{\widehat{R}}(\widehat{R}/\n^{i}\widehat{R},\widehat{C}) \to \widehat{C} \xrightarrow{\tiny\begin{pmatrix}a_1 \\ \vdots \\ a_m \end{pmatrix}} \widehat{C}^{\oplus m}\to \tr_{\widehat{C}} (\widehat{R}/\n^{i}\widehat{R}) \to 0 $
\end{center} \noindent
where the equality follows since $\depth \widehat{C} = \depth C >0.$ This makes an exact sequence
\begin{center}
$\Hom_S(\widehat{C},S)^{\oplus m} \xrightarrow{(a_1, \ldots, a_m)} \Hom_S(\widehat{C},S)\to \Ext^1_S(\tr_{\widehat{C}} (\widehat{R}/\n^{i}\widehat{R}),S) $, 
\end{center} \noindent
which yields an injection $\Hom_S(\widehat{C},S)/\n^i \Hom_S(\widehat{C},S) \to \Ext^1_S(\tr_{\widehat{C}} (\widehat{R}/\n^{i}\widehat{R}),S) $. \\
Thus \\
\begin{align*}
\Ann_S (\Hom_S(\widehat{C},S)/\n^{i} \Hom_S(\widehat{C},S)) 
 &\supseteq \Ann_S \Ext^1_S(\tr_{\widehat{C}} (\widehat{R}/\n^{i}\widehat{R}),S) \\ 
 &\supseteq \displaystyle\bigcap_{j>0}\Ann_S\Ext^j _S (\tr_{\widehat{C}} (\widehat{R}/\n^{i}\widehat{R}),S)  \\
&\supseteq \left( \displaystyle\prod_{j=1}^{d}\Ann_S\Ext^{j}_{S}(\widehat{R},S)\cdot \Ann_S\Ext^{j}_{S}(\widehat{G},S) \right) ^{r+1} 
\end{align*}
\vskip.5\baselineskip\noindent
and we obtain \vskip.5\baselineskip\noindent
\begin{align*}
\Ann_S \Hom_S(\widehat{C},S)&= \displaystyle\bigcap_{i>0} \Ann_S (\Hom_S(\widehat{C},S)/\n^{i} \Hom_S(\widehat{C},S))  \\
&\supseteq \left( \displaystyle\prod_{j=1}^{d}\Ann_S\Ext^{j}_{S}(\widehat{R},S)\cdot \Ann_S\Ext^{j}_{S}(\widehat{G},S) \right) ^{r+1} 
\end{align*}
 \vskip.5\baselineskip\noindent
where the equality follows from \cite[Lemma 4.8]{Dao Takahashi}. This implies \\
\begin{center}
$\supp_S \Hom_S(\widehat{C},S) \subseteq \displaystyle \bigcup_{j=1}^{d} (\supp_S \Ext^{j}_{S}(\widehat{R},S) \cup \supp_S \Ext^{j}_{S}(\widehat{G},S))$. \par
\end{center}

Let $I$ be the kernel of the surjection $S\to \widehat{R}$. Since $\dim S = d = \dim \widehat{R}$, the ideal $I$ of $S$ has height zero. Hence there exists a minimal prime ideal $\p$ of $S$ which contains $I$. Note that $\widehat{C}$ is a semidualizing $\widehat{R}$-module. Since $S_{\p}$ is an artinian Gorenstein local ring and $\widehat{C}_{\p} \cong \widehat{C}_{\p \widehat{R}} \neq 0$, the $S_{\p}$-module $\Hom_S(\widehat{C},S)_{\p} \cong \Hom_{S_{\p}}(\widehat{C}_{\p}, S_{\p})$ is nonzero. This implies that $\p$ belongs to $ \supp_S \Hom_S(\widehat{C},S) $. Therefore, for some integer $1\leq l \leq d$, the prime ideal $\p$ is in $\supp_S \Ext^{l}_{S}(\widehat{R},S) $ or $ \supp_S \Ext^{l}_{S}(\widehat{G},S) $. This contradicts the fact that $S_{\p}$ is injective $S_{\p}$-module. This contradiction proves that $\radius \X = \infty$. 
\end{proof} 

Combining the propositions above, we get our most general result.
\begin{thm} \label{thm3.1}
Let $C$ be a semidualizing $R$-module and $\X$ a $C$-resolving subcategory of $\mod R$. If $\X$ contains an $R$-module $M$ such that $0<\pd_C M <\infty$, then $\radius \X = \infty$.
\end{thm}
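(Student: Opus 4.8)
The plan is to chain together Propositions \ref{prop3.2}--\ref{prop3.7} in the obvious way, reducing the general statement to the key divergence criterion in Proposition \ref{prop3.7}. Throughout, write $n = \pd_C M$, which by hypothesis satisfies $0 < n < \infty$.

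First I would normalize the non-free locus. By Proposition \ref{prop3.3}, replacing $M$ by a suitable module in $\res_C M \subseteq \X$ (using that $\X$ is $C$-resolving, hence contains $\res_C M$), we may assume $\nf_C(M) = \{\m\}$, with $\pd_C M = n$ still. The point of this is that $\nf_C(M) = \{\m\}$ forces $\Ext^i_R(M,C)$ to have finite length for all $i > 0$: indeed, by Proposition \ref{prop3.4}(1), $\Ext^i_R(M,C) \cong \Ext^i_R(\Hom_R(C,M),R)$, and $\Hom_R(C,M)$ has projective dimension $n$ with non-free locus exactly $\{\m\}$ (Proposition \ref{prop3.2}), so these Ext modules are supported only at $\m$ and hence have finite length. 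By Proposition \ref{prop3.4}(2), $\Ext^n_R(M,C) \neq 0$. Next I would reduce to the case $n = 1$: if $n > 1$, replace $M$ by a high enough $C$-syzygy $N = \Omega^{n-1}_C M$, which lies in $\res_C M$ (its ordinary counterpart $\Omega^{n-1}\Hom_R(C,M)$ lies in $\res_R\Hom_R(C,M) = \Hom_R(C,\res_C M)$ by Proposition \ref{prop3.1}(1), noting $M \in \B_C$ since $\pd_C M < \infty$), and satisfies $\pd_C N = 1$ with $\Ext^1_R(N,C) \cong \Ext^n_R(M,C) \neq 0$ of finite length.

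Now I would apply Proposition \ref{prop3.5} to this module of $C$-projective dimension $1$ with $\Ext^1_R(-,C)$ nonzero of finite length: it produces $N' \in \res_C N \subseteq \X$ with $1 \leq \pd_C N' \leq 1$, hence $\pd_C N' = 1$, and $\Ext^1_R(N',C) \cong k$. Then Proposition \ref{prop3.6} applies to $N'$: for every $R$-module $L$ of finite length, $\tr_C L \in \res_C N' \subseteq \X$. In particular, taking $L = R/\m^i$ for each $i > 0$, we get $\tr_C(R/\m^i) \in \X$ for all $i > 0$. Finally, since $C$ is semidualizing, $\depth C = \depth R > 0$ would be needed for Proposition \ref{prop3.7}; but here I should be careful, as $\depth R$ could be $0$. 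I would handle this by a standard prime-avoidance / reduction-modulo-a-regular-element argument, or—more cleanly—observe that when $\depth C = 0$ one can pass to a polynomial extension $R[x]_{\m R[x]}$ (where $C \otimes_R R[x]_{\m R[x]}$ remains semidualizing by Proposition \ref{prop2.1}(2) and the depth goes up), reducing to the positive-depth case; alternatively, since $0 < \pd_C M < \infty$ already implies $\pd_R \Hom_R(C,M) > 0$, and one can quotient by a regular sequence to reach depth $0$... actually the cleanest route is: if $\depth R = 0$, take a module of positive $C$-projective dimension and note $R$ must then have no nonzero finite-length submodule issues—here I would simply invoke that the hypotheses are unchanged under going modulo an $R$-regular element when $\depth R > 0$, and that $\depth C = \depth R$; when $\depth R = 0$ the argument of Proposition \ref{prop3.7} still needs adaptation.

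The main obstacle I anticipate is precisely this depth-zero edge case: Proposition \ref{prop3.7} is stated only for $\depth C > 0$, because the computation there uses that $\Hom_{\widehat R}(\widehat R/\n^i\widehat R, \widehat C) = 0$. For the theorem to hold in full generality one needs either an additional argument showing $\depth R > 0$ follows from the existence of $M$ with $0 < \pd_C M < \infty$ in a $C$-resolving subcategory (which is false in general), or a genuine reduction. I expect the paper handles this by replacing $R$ with $R[x]_{\m R[x]}$: base change along the faithfully flat local homomorphism $R \to R[x]_{\m R[x]}$ sends $C$-resolving subcategories and $C$-projective dimension compatibly, raises depth by one, and a ball $[G]_{r+1}$ in $\mod R$ that contains $\X$ would base-change to one containing the base change of $\X$, so $\radius \X < \infty$ would give $\radius$ of the base change finite, contradicting the positive-depth case. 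I would present the reduction to $\depth C > 0$ as a short lemma-free paragraph, then finish by invoking Proposition \ref{prop3.7} with $\X$ as above, concluding $\radius \X = \infty$.
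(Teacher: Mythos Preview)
Your chain of reductions through Propositions \ref{prop3.3}, \ref{prop3.5}, \ref{prop3.6}, and \ref{prop3.7} is exactly the paper's argument (the paper first passes to a $C$-syzygy to reach $\pd_C M = 1$, then invokes \ref{prop3.3} and \ref{prop3.5} together, but the order is immaterial). The only point where you diverge is the ``main obstacle'' you flag at the end, and that obstacle is illusory.

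The hypothesis $0 < \pd_C M < \infty$ already forces $\depth C > 0$. Indeed, $\pd_C M = \pd_R \Hom_R(C,M)$ by Proposition \ref{prop2.3}, so $\Hom_R(C,M)$ is a module of finite positive projective dimension; by the Auslander--Buchsbaum formula this is impossible when $\depth R = 0$. Hence $\depth R \ge 1$, and $\depth C = \depth R \ge 1$ by Proposition \ref{prop2.1}(3). No polynomial extension, base change, or reduction modulo a regular element is needed; Proposition \ref{prop3.7} applies directly, which is precisely why the paper's proof simply cites \ref{prop3.6} and \ref{prop3.7} without further comment.
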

\begin{proof} Write $n= \pd_C M$, then there exists an exact sequence $0\to C_n \xrightarrow{\delta_n} C_{n-1} \xrightarrow{\delta_{n-1}} \cdots \xrightarrow{\delta_2} C_1 \xrightarrow{\delta_1} C_0 \to M \to 0$ such that each $C_i$ belongs to $\add_R C$. Since $\image \delta_{n-1} \in \X$ and $\pd_C (\image \delta_{n-1}) = 1$, we may assume that $\pd_C M = 1$. Then by \ref{prop3.3} and \ref{prop3.5}, we may further assume that $\Ext_R (M,C) \cong k$. Hence by \ref{prop3.6} and \ref{prop3.7}, we have $\radius \X = \infty$. 
\end{proof}

Letting $C=R$ in Theorem \ref{thm3.1}, we immediately recover \cite[Proposition 4.9]{Dao Takahashi}.

\begin{cor}[Dao and Takahashi]
Let $\X$ be a resolving subcategory of $\mod R$. If $\X$ contains an $R$-module $M$ such that $0<\pd_R M < \infty$, then $\radius \X = \infty$.
\end{cor}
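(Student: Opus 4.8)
The plan is to obtain this corollary as the special case $C = R$ of Theorem \ref{thm3.1}, which has just been established. The first thing to recall is that, as noted right after the definition of semidualizing modules, $R$ is itself a semidualizing $R$-module; hence Theorem \ref{thm3.1} is applicable with $C = R$, and it only remains to check that the hypotheses and conclusion of the corollary match those of the theorem under this specialization.

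Next I would verify the two relevant translations of data. On the one hand, $\add_R R$ is precisely the subcategory of finitely generated free $R$-modules: since a resolving subcategory contains $R$ and is closed under direct summands it contains every projective, and conversely any subcategory containing $R$ and closed under direct summands, extensions, and kernels of epimorphisms is $R$-resolving in the sense of Definition \ref{defofcres}(1). Thus ``$\X$ is resolving'' and ``$\X$ is $R$-resolving'' are literally the same condition. On the other hand, an $(\add_R R)$-resolution of $M$ is exactly a finite free resolution, so $\pd_R M = \pd_C M$ when $C = R$; consequently the hypothesis $0 < \pd_R M < \infty$ coincides with the hypothesis $0 < \pd_C M < \infty$ appearing in Theorem \ref{thm3.1}.

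With these identifications in place, Theorem \ref{thm3.1} immediately gives $\radius \X = \infty$, and nothing further is needed. I do not anticipate any obstacle: the entire content of the corollary is absorbed into Theorem \ref{thm3.1}, and this final step is purely a matter of unwinding definitions in the case $C = R$. (Alternatively, one could avoid invoking Theorem \ref{thm3.1} and instead run the arguments of Propositions \ref{prop3.3}--\ref{prop3.7} directly with $C = R$, but this would simply reproduce the original proof of Dao and Takahashi.)
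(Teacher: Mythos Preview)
Your proposal is correct and matches the paper's approach exactly: the paper simply remarks that letting $C=R$ in Theorem \ref{thm3.1} immediately recovers the corollary, and your verification that ``resolving'' coincides with ``$R$-resolving'' and that $\pd_R = \pd_C$ for $C=R$ is precisely the unwinding of definitions needed to justify this specialization.
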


Applying the above theorem to the case where $R$ is Cohen--Macaulay and $C$ is a canonical module, we immediately get the following corollary, which is the main result of this paper.

\begin{cor} \label{cor3.2}
Let $R$ be a Cohen--Macaulay local ring with a canonical module $\omega$. Let $\X$ be an  $\omega$-resolving subcategory of $\mod R$ (e.g., a resolving subcategory of $\mod R$ containing $\omega$). If $\X$ contains an $R$-module $M$ such that $\id_R M< \infty$, and $M$ is non-maximal Cohen--Macaulay module. Then $\radius \X = \infty$.
\end{cor}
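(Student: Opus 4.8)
The plan is to reduce Corollary \ref{cor3.2} to Theorem \ref{thm3.1} by taking $C = \omega$ and verifying its hypothesis, namely that $\X$ contains a module $M$ with $0 < \pd_\omega M < \infty$. First I would recall that since $R$ is Cohen--Macaulay with canonical module $\omega$, the module $\omega$ is indeed semidualizing, so $\X$ being $\omega$-resolving makes sense and Theorem \ref{thm3.1} is applicable once the projective-dimension condition is checked. (If $\X$ is merely a resolving subcategory containing $\omega$, then $\X$ is automatically $\omega$-resolving, since it contains $\omega$ and is closed under direct summands, extensions, and kernels of epimorphisms.)

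The heart of the matter is the translation between finite injective dimension over a Cohen--Macaulay ring and finite $\omega$-projective dimension. By Proposition \ref{prop2.3}, $\pd_\omega M = \pd_R \Hom_R(\omega, M)$, so I want to show that $\id_R M < \infty$ is equivalent to $\pd_R \Hom_R(\omega, M) < \infty$, and that $M$ non-MCM corresponds to $\pd_\omega M > 0$. For the finiteness: a finitely generated module of finite injective dimension over a Cohen--Macaulay local ring with canonical module lies in the Bass class $\B_\omega$ (indeed, modules of finite injective dimension are exactly the modules in $\B_\omega$ of finite $\omega$-injective dimension, and more to the point $\id_R M < \infty$ forces $M \in \B_\omega$); applying the Foxby equivalence, $\Hom_R(\omega, M)$ then has finite projective dimension. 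Conversely I should check that if $M \in \X \subseteq \B_\omega$ with $\pd_\omega M$ finite, things are consistent — but actually for the corollary I only need the forward direction: $\id_R M < \infty \Rightarrow \pd_\omega M < \infty$.

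Next, the non-MCM hypothesis. Using $\depth \omega = \depth R = \dim R$ (Proposition \ref{prop2.1}(3) and Cohen--Macaulayness) together with the Auslander--Buchsbaum-type formula $\depth_R \Hom_R(\omega,M) = \depth_R R - \pd_R \Hom_R(\omega, M)$ when the latter is finite, and the fact that $M \in \B_\omega$ gives $M \cong \omega \otimes_R \Hom_R(\omega,M)$ so that $\depth_R M = \depth_R \Hom_R(\omega, M)$ (this uses a depth-computation in the Foxby equivalence, or alternatively that $\pd_\omega M = \dim R - \depth_R M$, the Auslander--Buchsbaum formula relative to $\omega$). Hence $\pd_\omega M = \dim R - \depth_R M$, and $M$ is MCM exactly when $\depth_R M = \dim R$, i.e. when $\pd_\omega M \le 0$. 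So $M$ non-MCM and $\id_R M < \infty$ yields $0 < \pd_\omega M < \infty$, and also $M \in \X$. Theorem \ref{thm3.1} then gives $\radius \X = \infty$.

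I expect the main obstacle to be pinning down the precise statement and reference that $\id_R M < \infty$ implies $M \in \B_\omega$ and hence $\pd_R \Hom_R(\omega, M) < \infty$ (this is a standard fact from the theory of semidualizing modules / Auslander and Bass classes over Cohen--Macaulay rings, but it should be cited cleanly), together with the clean derivation of the relative Auslander--Buchsbaum formula $\pd_\omega M = \depth R - \depth_R M$ that converts "non-MCM" into "$\pd_\omega M > 0$"; the rest is essentially bookkeeping on top of Theorem \ref{thm3.1}.
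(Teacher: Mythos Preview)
Your proposal is correct and follows essentially the same route as the paper: reduce to Theorem \ref{thm3.1} with $C=\omega$ by checking $0<\pd_\omega M<\infty$. The paper simply cites \cite[3.3.28]{BH} for this fact (which gives both $\pd_\omega M<\infty$ and the formula $\pd_\omega M=\dim R-\depth_R M$), whereas you unpack the same content via $\B_\omega$, Foxby equivalence, and the relative Auslander--Buchsbaum formula.
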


\begin{proof}
By \cite[3.3.28]{BH}, we have $0<\pd_\omega M<\infty$. Hence we obtain $\radius\X = \infty$ by Theorem \ref{thm3.1}. 
\end{proof}

\begin{rmk} \label{rmk3.1}
In the situation of \ref{thm3.1}, the divergence of $\radius_C \X$  holds immediately as follows. Since $\Hom_R(C,X)$ is in $\Hom_R(C,\X \cap \B_C)$, the subcategory $\Hom_R(C,\X \cap \B_C)$ is $R$-resolving. Also, we have $0<\pd_R \Hom_R(C,X) <\infty$ and $\radius \Hom_R(C,\X \cap \B_C)= \infty$ by \cite[Proposition 4.9]{Dao Takahashi}. Now we get the inequalities $\radius_C \X \geq \radius_C (\X \cap \B_C) \geq \radius\Hom_R(C, \X \cap \B_C)$, where the second inequality follows from the fact that for a subcategory $\X$ of $\mod R$, $G \in \B_C$ and $r\ge 1$ the equivalence 
\begin{center}
$\X \subseteq [G]_r^C \iff \Hom_R(C,\X) \subseteq [\Hom_R(C,G)]_r^R$ 
\end{center}
holds.
Hence we have $\radius_C \X = \infty$. However, this consequence does not imply the conclusion of \ref{thm3.1}, because the module $G$ in \eqref{radiusdef} does not necessarily belong to $\B_C$ nor $\A_C$. So we need to establish the semidualizing version of the proof of \cite[Proposition 4.9]{Dao Takahashi}. 
\end{rmk}

\begin{rmk}
When we generalize some statements by replacing $R$ with a semidualizing module $C$, we may expect that the new statements provide unknown meaningful consequences even if $C$ is the canonical module of a Cohen--Macaulay local ring $R$. From this viewpoint, the direct generalization of \cite[Theorem 4.10]{Dao Takahashi} is unlikely to be done unless we find another way. Indeed, if $R$ is Cohen--Macaulay with canonical module $\omega$, every $R$-module $M$ has finite $\tG _{\omega}$-dimension. So the hypothesis of \cite[Theorem 4.10]{Dao Takahashi} has no meaning. Moreover, when we try to use its proof by replacing $R$ by a semidualizing module $C$, it is necessary to assume $\pd_C R <\infty$. Hence there exists a surjection $C^{\oplus m} \to R$ and this map is a split epimorphism. Therefore $R \in \add_R C$. Since $R$ is indecomposable, we have $R \cong C$. This means that we cannot assume $\pd_C R <\infty$.
\end{rmk}

\section{Applications of main theorem}

The purpose of this section is to state and prove Theorem \ref{thm4.1} below, as an application of Corollary \ref{cor3.2}. To achieve this purpose, we state a couple of lemmas.

\begin{lem} \label{lem4.1}
Let $R$ be a $d$-dimensional Cohen--Macaulay local ring, $X$ a nonzero maximal Cohen--Macaulay $R$-module with $\id_R X < \infty$, and $M$ an $R$-module with $\id_R X < \infty$. Then the following hold.
\begin{enumerate}[\rm(1)]
\item
One has $\Tor^R _i (X,\Hom_R (X,M))=0$ for all $i>0$ and $\pd_R \Hom_R (X,M)\leq d-\depth M$.
\item 
Suppose that $M$ is nonzero and set $n=d-\depth M$. Then there exists an exact sequence $0 \to X^{\oplus a_n} \to \cdots \to X^{\oplus a_0} \to M^{\oplus m} \to 0$, where $a_0, \dotsc, a_n$ are non-negative integers, and $m$ is a positive integer.
\end{enumerate}
\end{lem}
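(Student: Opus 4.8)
The plan is to reduce the whole statement to properties of the canonical module via Foxby equivalence, passing to the completion $\widehat R$ — everything in sight ($\pd$, $\depth$, $\dim$, being maximal Cohen--Macaulay or of finite injective dimension, and the formation of $\Hom$ out of a finitely generated module) is insensitive to completion — so that we may assume a canonical module $\omega$ is available. The structural input I would isolate first is that \emph{a nonzero maximal Cohen--Macaulay module $X$ of finite injective dimension over a Cohen--Macaulay local ring with canonical module $\omega$ satisfies $X\cong\omega^{\oplus s}$ for some $s\ge 1$}: since $\id_R X<\infty$, $X$ lies in the Bass class $\B_\omega$, so $X\cong\omega\otimes_R\Hom_R(\omega,X)$ with $\Hom_R(\omega,X)\in\A_\omega$; here $\pd_R\Hom_R(\omega,X)<\infty$ by \cite[3.3.28]{BH}, while $\Hom_R(\omega,X)$ is maximal Cohen--Macaulay because Foxby equivalence preserves depth, so $\Hom_R(\omega,X)$ is free by the Auslander--Buchsbaum formula. (When $R$ has no canonical module one argues the same way over $\widehat R$, writing $\widehat\omega$ for its canonical module; note $\widehat M\in\B_{\widehat\omega}$ since $\id_{\widehat R}\widehat M=\id_R M<\infty$.)

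For part (1) we may assume $M\ne 0$. Writing $X\cong\omega^{\oplus s}$, we have $\Hom_R(X,M)\cong\Hom_R(\omega,M)^{\oplus s}$. Since $\id_R M<\infty$ forces $M\in\B_\omega$, the very definition of the Bass class gives $\Tor^R_i(\omega,\Hom_R(\omega,M))=0$ for $i>0$, hence $\Tor^R_i(X,\Hom_R(X,M))=0$ for $i>0$; this vanishing is already visible over $\widehat R$, and descends to $R$ by faithful flatness. For the projective dimension, $\pd_R\Hom_R(X,M)=\pd_R\Hom_R(\omega,M)=\pd_\omega M$ by \ref{prop2.3}, and $\pd_\omega M=\depth R-\depth M=d-\depth M$, by the Auslander--Buchsbaum formula for $\Hom_R(\omega,M)$ combined with the preservation of depth under Foxby equivalence (equivalently, this is contained in \cite[3.3.28]{BH}).

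For part (2), let $M\ne 0$ and $n=d-\depth M$. By part (1), $\Hom_R(X,M)$ has finite projective dimension $p\le n$ over $R$ and $\Tor^R_i(X,\Hom_R(X,M))=0$ for all $i>0$, both as honest statements over $R$. Choose a minimal free resolution $0\to F_p\to\cdots\to F_0\to\Hom_R(X,M)\to 0$ with $F_i\cong R^{\oplus b_i}$ and apply $X\otimes_R-$: by the $\Tor$-vanishing, $0\to X^{\oplus b_p}\to\cdots\to X^{\oplus b_0}\to X\otimes_R\Hom_R(X,M)\to 0$ is exact. It remains to see $X\otimes_R\Hom_R(X,M)\cong M^{\oplus m}$ for some $m\ge 1$. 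Passing to completions, $\widehat{X\otimes_R\Hom_R(X,M)}\cong\widehat X\otimes_{\widehat R}\Hom_{\widehat R}(\widehat X,\widehat M)\cong\widehat\omega^{\oplus s}\otimes_{\widehat R}\Hom_{\widehat R}(\widehat\omega,\widehat M)^{\oplus s}\cong(\widehat\omega\otimes_{\widehat R}\Hom_{\widehat R}(\widehat\omega,\widehat M))^{\oplus s^2}\cong\widehat M^{\oplus s^2}$, the last step because $\widehat M\in\B_{\widehat\omega}$ makes the natural map $\widehat\omega\otimes_{\widehat R}\Hom_{\widehat R}(\widehat\omega,\widehat M)\to\widehat M$ an isomorphism (\ref{prop2.2}). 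Since finitely generated modules over a Noetherian local ring with isomorphic completions are isomorphic, $X\otimes_R\Hom_R(X,M)\cong M^{\oplus s^2}$ over $R$. Taking $m=s^2$, setting $a_i=b_i$ for $0\le i\le p$ and $a_i=0$ for $p<i\le n$, and prepending zero terms, we obtain $0\to X^{\oplus a_n}\to\cdots\to X^{\oplus a_0}\to M^{\oplus m}\to 0$.

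I expect the main obstacle to be the structural reduction $X\cong\omega^{\oplus s}$ and the companion identification $X\otimes_R\Hom_R(X,M)\cong M^{\oplus m}$, both of which rely on Foxby equivalence being an exact, depth-preserving equivalence and on handling a ring without a canonical module through faithfully flat descent from $\widehat R$. A more delicate point is keeping the construction in part (2) over $R$ itself — possible only because the $\Tor$-vanishing of part (1) holds over $R$ — so that completion is used merely to recognize the isomorphism type of $X\otimes_R\Hom_R(X,M)$.
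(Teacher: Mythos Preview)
Your proof is correct and follows essentially the same approach as the paper: both pass to the completion to obtain a canonical module $\omega$, use that $X\in\add\omega$ (equivalently $X\cong\omega^{\oplus s}$, cf.\ the remark after the definition of $\add$) together with $M\in\B_\omega$ to get the $\Tor$-vanishing and the bound on $\pd_R\Hom_R(X,M)$, and then tensor a free resolution of $\Hom_R(X,M)$ with $X$ and identify $X\otimes_R\Hom_R(X,M)\cong M^{\oplus m}$. The only cosmetic difference is that the paper records the last isomorphism via $X\otimes_R\Hom_R(X,M)\cong\Hom_R(\Hom_R(X,X),M)$ with $\Hom_R(X,X)\cong R^{\oplus m}$, whereas you compute it directly as $M^{\oplus s^2}$; these yield the same $m=s^2$.
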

\begin{proof}
(1) We may assume that $R$ is complete. Let $\omega$ be a canonical module of $R$. Since $X$ is in $\add \omega$ and $M$ is in $\B_{\omega}$, we have $\Tor^R _i (X,\Hom_R (X,M))=0$ for all $i>0$. On the other hand, by \cite[3.3.28]{BH}, there exists an exact sequence $0 \to \omega_n \to \cdots \to \omega_0 \to M \to 0$ in $\B_{\omega}$, where $\omega_i \in \add \omega$ for all $i$ and $n=d-\depth M$. Applying the functor $\Hom_R(X,-)$, we have an exact sequence $0 \to \Hom_R(X,\omega_n) \to \cdots \to \Hom_R(X,\omega_0) \to \Hom_R(X,M) \to 0$. Since $\Hom_R(X,\omega_i)$ is a free $R$-module for each $0 \leq i \leq n$, the sequence $0 \to \Hom_R(X,\omega_n) \to \cdots \to \Hom_R(X,\omega_0) \to 0$ gives a free resolution of $\Hom_R(X,M)$.  \\
(2) Let $\omega_{\widehat{R}}$ be a canonical module of $\widehat{R}$. The $R$-module $\Hom_R(X,X)$ is free; let $m$ be its free rank. Note then that $m>0$. There exist isomorphisms $X\otimes_R \Hom_R(X,M) \cong \Hom_R(\Hom_R(X,X),M) \cong M^{\oplus m}$, since $\widehat{X}$ is in $\add \omega_{\widehat{R}}$ and $\widehat{M}$ is in $\B_{\omega_{\widehat{R}}}$. By (1), we can take a free resolution $0 \to R^{\oplus a_n} \to \cdots \to R^{\oplus a_0} \to 0$ of $\Hom_R(X,M)$.  Applying the functor $X \otimes_R -$, we have an exact sequence $0 \to X^{\oplus a_n} \to \cdots \to X^{\oplus a_0} \to M^{\oplus m} \to 0$.
\end{proof}

Note that for any subcategory $\X$ of $\mod R$ and a flat $R$-algebra $S$, we have the inequality $\radius_S (\X \otimes_R S) \leq \radius \X$. A special case can be found in \cite[Remark 4.1]{Dao Takahashi}.

\begin{lem} \label{lem4.2}
Let $R$ be a Cohen--Macaulay local ring and $\X$ a subcategory of $\mod R$ satisfying the following conditions: 
\begin{enumerate}[\rm(1)]
\item
The subcategory $\X$ of $\mod R$ is closed under direct summands, extensions, and kernels of epimorphisms.
\item 
The modules in $\X$ are locally maximal Cohen--Macaulay on the punctured spectrum of $R$ and have finite injective dimension.
\item
The subcategory $\X$ contains two $R$-modules $X,Y$ such that $X$ is a nonzero maximal Cohen--Macaulay module and $Y$ is not a maximal Cohen--Macaulay module.
\end{enumerate}
Then $\add_{\widehat{R}} \widehat{\X}$ is $\omega_{\widehat{R}}$-resolving and $\radius \X = \infty$.
\end{lem}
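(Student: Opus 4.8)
The plan is to reduce to the Cohen--Macaulay complete case with a canonical module and then invoke Corollary~\ref{cor3.2} after checking that $\add_{\widehat{R}}\widehat{\X}$ is $\omega_{\widehat{R}}$-resolving. First I would pass to the completion $\widehat{R}$, which is a Cohen--Macaulay local ring admitting a canonical module $\omega_{\widehat{R}}$; since $\radius_{\widehat{R}}(\X\otimes_R\widehat{R})\le\radius\X$ by the inequality recorded just before the lemma, it suffices to prove $\radius_{\widehat{R}}\widehat{\X}=\infty$, and in fact $\radius(\add_{\widehat{R}}\widehat{\X})=\infty$ since adding direct summands does not change the radius. Completion is faithfully flat, so the hypotheses (1)--(3) are inherited: $\widehat{\X}$ (hence $\add_{\widehat{R}}\widehat{\X}$) is closed under direct summands, extensions, and kernels of epimorphisms; its modules have finite injective dimension over $\widehat{R}$ and are locally maximal Cohen--Macaulay on the punctured spectrum; and $\widehat{X},\widehat{Y}$ witness condition (3). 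Thus I may and do assume from the start that $R$ is complete with canonical module $\omega$, and I work with $\Y:=\add_R\X$.

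The key step is to show $\omega\in\Y$, for then $\Y$ is $\omega$-resolving (it contains $\omega$ and is closed under direct summands, extensions, and kernels of epimorphisms by (1)), it contains the non-MCM module $Y$ of finite injective dimension, and Corollary~\ref{cor3.2} gives $\radius\Y=\infty$, whence $\radius\X=\infty$. To produce $\omega$ in $\Y$, I would exploit the nonzero MCM module $X\in\X$ with $\id_RX<\infty$: by the characterization of finite injective dimension over a CM local ring, such an $X$ lies in $\add_R\omega$, say $X\mid\omega^{\oplus t}$. The natural pairing realizes $\omega$ as a direct summand of $X\otimes_R\Hom_R(X,\omega)$, and more usefully Lemma~\ref{lem4.1}(2), applied with $M=\omega$ (which is MCM, so $n=d-\depth\omega=0$), yields a short exact sequence $0\to X^{\oplus a_1}\to X^{\oplus a_0}\to\omega^{\oplus m}\to 0$ with $m>0$ — wait, with $n=0$ this degenerates to $X^{\oplus a_0}\cong\omega^{\oplus m}$; applied instead with a non-free choice it still shows $\omega^{\oplus m}\in\seek X\subseteq\Y$, and since $\Y$ is closed under direct summands we get $\omega\in\Y$.

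More robustly, to avoid case analysis on whether $X$ is free, I would argue directly: $X\in\add_R\omega$ means $\Hom_R(X,\omega)$ is a nonzero free $R$-module and $X\otimes_R\Hom_R(X,\omega)\cong\Hom_R(\Hom_R(X,X),\omega)\cong\omega^{\oplus m}$ with $m=\operatorname{rank}_R\Hom_R(X,X)>0$; since a finite direct sum of copies of $X$ is built from $X$ by extensions, $\omega^{\oplus m}\in\Y$, and closure under direct summands gives $\omega\in\Y$. Then Corollary~\ref{cor3.2} applies to the $\omega$-resolving subcategory $\Y$ containing the non-MCM module $Y$ of finite injective dimension, so $\radius\Y=\infty$, and therefore $\radius\X=\infty$ and $\radius_{\widehat R}(\add_{\widehat R}\widehat\X)=\infty$ in the original (possibly non-complete) setting. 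The main obstacle I anticipate is the bookkeeping in the reduction to the complete case — verifying that conditions (1)--(3) and the ``locally MCM on the punctured spectrum / finite injective dimension'' clause genuinely descend to and ascend from $\widehat{R}$, and checking that $\add_{\widehat R}\widehat\X$ (rather than just $\widehat\X$) is the right object to feed into Corollary~\ref{cor3.2}; the production of $\omega$ inside $\Y$ is comparatively routine once $X\in\add_R\omega$ is in hand.
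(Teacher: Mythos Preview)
Your reduction to the complete case is where the real work lies, and you have underestimated it by calling it ``bookkeeping''. The closure properties in condition (1) do \emph{not} automatically transfer to $\add_{\widehat R}\widehat{\X}$ under faithfully flat base change. Concretely, an extension $0\to\widehat{A}\to M\to\widehat{B}\to 0$ in $\mod\widehat R$ corresponds to a class in $\Ext^1_{\widehat R}(\widehat B,\widehat A)\cong\Ext^1_R(B,A)\otimes_R\widehat R$, and there is no reason for such a class to come from $\Ext^1_R(B,A)$ unless that module is already $\m$-adically complete. Likewise, a surjection $\widehat A\to\widehat B$ in $\mod\widehat R$ need not be the completion of a map $A\to B$ in $\mod R$, so its kernel need not lie in $\widehat{\X}$. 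This is exactly why hypothesis (2) is in the statement: ``locally MCM on the punctured spectrum'' forces $\Ext^1_R(B,A)$ to have finite length, hence to coincide with its completion, and this is how the paper handles closure under extensions. For kernels of epimorphisms the paper does not descend the map at all; instead it uses Lemma~\ref{lem4.1}(2) to cover $\widehat B^{\oplus m}$ by a power of $\widehat X$, builds a pullback square, and then splits off $M^{\oplus m}$ using $\Ext^1_{\widehat R}(\widehat X,M)=0$ (which holds because $M\in\B_{\omega_{\widehat R}}$). None of this appears in your sketch.

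By contrast, the step you treat as the ``key'' --- producing $\omega$ inside $\Y$ --- is essentially immediate once you are over $\widehat R$: a nonzero MCM module of finite injective dimension over a CM local ring with canonical module is isomorphic to $\omega_{\widehat R}^{\oplus m}$ for some $m>0$ (cf.\ the remark after the definition of $\add$), so $\omega_{\widehat R}$ is a direct summand of $\widehat X\in\widehat{\X}$ and lands in $\add_{\widehat R}\widehat{\X}$ for free. Your tensor--Hom manoeuvre is correct but unnecessary. In short, you have inverted the difficulty: the production of $\omega$ is routine, while the transfer of the closure properties to the completion is the substance of the lemma and requires hypothesis~(2) and Lemma~\ref{lem4.1} in an essential way.
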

\begin{proof}
Since $\add_{\widehat{R}} \widehat{\X}$ is closed under direct summands and $\omega_{\widehat{R}}^{\oplus m} \cong \widehat{X} \in \widehat{\X}$ with $m>0$, we have $\omega_{\widehat{R}} \in \add_{\widehat{R}} \widehat{\X}$. We shall show that $\add_{\widehat{R}} \widehat{\X}$ is closed under extensions. We have only to prove that if there exists an exact sequence $ \sigma : 0 \to \widehat{A} \to M \to \widehat{B} \to 0$ such that $A,B \in \X$, then $M$ is in $\widehat{\X}$. By the hypothesis (2), the module $\Ext^1 _R (B,A)$ has finite length. Hence $\Ext^1 _R (B,A)$ is complete. Thus there exists an exact sequence $\sigma_0 : 0 \to A \to N \to B \to 0 $ such that $\widehat{\sigma_0} = \sigma$. Since $\X$ is closed under extensions, we have $N \in \X$ and therefore $M \cong \widehat{N} \in \widehat{\X}$. \par
Next, we will show that $\add_{\widehat{R}} \widehat{\X}$ is closed under kernels of epimorphisms. We have only to prove that if there exists an exact sequence $0 \to M \to \widehat{A} \to \widehat{B} \to 0$ such that $A,B \in \X$, then $M$ is in $\add_{\widehat{R}} \widehat{\X}$. By \ref{lem4.1}, we have an exact sequence $0 \to C \to X^{\oplus n} \to B^{\oplus m} \to 0$. Since $\X$ is closed under kernels of epimorphisms, $C$ is in $\X$. By taking completion and direct sums, we have the following pullback diagram: 
\begin{equation*}
\xymatrix{
  &   & 0 \ar[d] & 0 \ar[d] &   \\
  &   & \widehat{C} \ar@{=}[r] \ar[d] & \widehat{C} \ar[d] &   \\
0 \ar[r] & M^{\oplus m} \ar[r] \ar@{=}[d] & D \ar[r] \ar[d] & \widehat{X}^{\oplus n} \ar[r] \ar[d] & 0 \\
0 \ar[r] & M^{\oplus m} \ar[r] & \widehat{A}^{\oplus m} \ar[r] \ar[d] & \widehat{B}^{\oplus m} \ar[r] \ar[d] & 0 \\
  &   & 0 & 0 &   
}
\end{equation*}
We see from the middle column that $D$ is in $\widehat{\X}$. Since $M$ has finite injective dimension, $M$ belongs to $\B_{\omega_{\widehat R}}$. Hence we have $\Ext^1_{\widehat{R}}(\widehat{X},M) = 0$. This implies that the first row is a split exact sequence. Thus $M$ is a direct summand of $D$ and therefore $M$ is in $\add_{\widehat{R}} \widehat{\X}$. Now we proved that $\add_{\widehat{R}} \widehat{\X}$ is $\omega_{\widehat{R}}$-resolving. Hence by \ref{cor3.2}, we have $\infty = \radius _{\widehat{R}}( \add_{\widehat{R}} \widehat{\X} ) \leq \radius \X$.
\end{proof}

Now we can achieve the purpose of this section.

\begin{thm} \label{thm4.1}
Let $R$ be a Cohen--Macaulay local ring, and $\X$ an $R$-resolving subcategory of $\mod R$. Suppose that $\X$ contains two $R$-modules $X,Y$ of finite injective dimension such that $X$ is a nonzero maximal Cohen--Macaulay module and $Y$ is not a maximal Cohen--Macaulay module. Then $\radius \X = \infty$. 
\end{thm}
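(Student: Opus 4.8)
The plan is to deduce Theorem~\ref{thm4.1} from Lemma~\ref{lem4.2}, applied to the subcategory $\Y$ of $\X$ consisting of the modules $M \in \X$ with $\id_R M < \infty$ that are locally maximal Cohen--Macaulay on the punctured spectrum of $R$. Since $\X$ is resolving and the two defining conditions of $\Y$ are each inherited by direct summands, extensions, and kernels of epimorphisms --- for the condition of being locally maximal Cohen--Macaulay off $\m$ this uses only the depth inequalities in short exact sequences together with the Cohen--Macaulayness of the rings $R_\p$ --- the subcategory $\Y$ satisfies conditions (1) and (2) of Lemma~\ref{lem4.2}; and $X \in \Y$, because a nonzero maximal Cohen--Macaulay module over a Cohen--Macaulay ring is locally maximal Cohen--Macaulay everywhere. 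So the entire problem reduces to condition (3): one must exhibit a non-maximal Cohen--Macaulay module inside $\Y$. The given module $Y$ need not qualify, since finite injective dimension does not make $Y$ locally maximal Cohen--Macaulay away from $\m$; overcoming this is the crux.

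To build the module, set $H := \Hom_R(X,Y)$ and $\mu := \operatorname{rank}_R\Hom_R(X,X)$, a positive integer as in the proof of Lemma~\ref{lem4.1}(2). Lemma~\ref{lem4.1}(1), applicable since $\id_R Y < \infty$, gives $\Tor^R_i(X,H) = 0$ for $i>0$ and $\pd_R H \le \dim R - \depth Y < \infty$ (note $\depth Y < \dim R$ as $Y$ is not maximal Cohen--Macaulay); also $\pd_R H \ge 1$, for if $H$ were free then $X \otimes_R H \cong Y^{\oplus\mu}$ --- the isomorphism established in the proof of Lemma~\ref{lem4.1}(2) --- would exhibit $Y$ as a summand of a free sum of copies of $X$, hence maximal Cohen--Macaulay. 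Thus $0 < \pd_R H < \infty$, and Proposition~\ref{prop3.3} with $C = R$ furnishes $N \in \res_R H$ with $\pd_R N = \pd_R H$ and $\nf_R(N) = \{\m\}$, so $N_\p$ is free over $R_\p$ for all $\p \ne \m$. Set $Z := X \otimes_R N$. That $Z \in \X$ follows by checking that $\mathcal{T} := \{M' \in \mod R : \Tor^R_i(X,M') = 0\ \forall i>0\text{ and }X \otimes_R M' \in \X\}$ is a resolving subcategory containing $H$ --- a routine chase with the long exact sequences of $\Tor^R_\bullet(X,-)$, exploiting that vanishing of $\Tor^R_{>0}(X,C')$ keeps $X \otimes_R(-)$ exact on a short exact sequence with cokernel $C'$ --- so that $\res_R H \subseteq \mathcal{T}$ and $N \in \mathcal{T}$. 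Tensoring a finite free resolution of $N$ by $X$ then yields an exact sequence $0 \to X^{(a_t)} \to \cdots \to X^{(a_0)} \to Z \to 0$, whence $\id_R Z < \infty$ (finite injective dimension passes to kernels of epimorphisms), while localizing at any $\p \ne \m$ gives $Z_\p \cong X_\p \otimes_{R_\p} N_\p \cong X_\p^{(\ast)}$, maximal Cohen--Macaulay over $R_\p$; hence $Z \in \Y$. Finally $Z$ is not maximal Cohen--Macaulay: applying $\Hom_R(X,-)$ to the displayed sequence and using $\Ext^i_R(X,X) = 0$ for $i > 0$ (which follows, after completion, from $\widehat X \in \add\omega_{\widehat R}$ and $\omega_{\widehat R}$ being semidualizing) identifies $\Hom_R(X,Z)$ with $N^{\oplus\mu}$, so if $Z$ were maximal Cohen--Macaulay then Lemma~\ref{lem4.1}(1) would force $N$ to be free, contradicting $\pd_R N \ge 1$.

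Taking $X$ and $Z$ as the two modules in condition (3), Lemma~\ref{lem4.2} applies to $\Y$ and gives $\radius\Y = \infty$, and therefore $\radius\X = \infty$ because $\Y \subseteq \X$. The main obstacle throughout is the construction of $Z$: one needs a module of finite injective dimension inside $\X$ that is non-maximal Cohen--Macaulay yet locally maximal Cohen--Macaulay away from $\m$, and the mechanism is to run the ``shrink the non-free locus'' procedure of Proposition~\ref{prop3.3} on $\Hom_R(X,Y)$ and transport the output through $X \otimes_R -$; the vanishing of $\Tor^R_{>0}(X,-)$ on $\res_R\Hom_R(X,Y)$ and of $\Ext^{>0}_R(X,X)$ is precisely what keeps this transport inside $\X$ and computes $\Hom_R(X, X\otimes_R N)$.
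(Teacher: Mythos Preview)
Your argument is correct, but it takes a genuinely different route from the paper's proof. The paper does not stay over $R$ and build a new module $Z$; instead it localizes. Concretely, the paper takes a minimal prime $\p$ in the non-MCM locus of $Y$, observes that $\add_{R_\p}\X_\p$ is $R_\p$-resolving (by \cite[Lemma 4.8]{stcm}) and that $X_\p\ne 0$ (since $\supp X=\spec R$), and notes that by minimality $Y_\p$ is locally MCM on the punctured spectrum of $R_\p$ while still not MCM. Thus after replacing $R,\X,X,Y$ by $R_\p,\add_{R_\p}\X_\p,X_\p,Y_\p$ one may simply assume $Y$ itself is locally MCM on the punctured spectrum, and then Lemma~\ref{lem4.2} applies directly to $\seek\{X,Y\}\subseteq\X$. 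This is much shorter: it sidesteps the entire construction of $Z$, the resolving subcategory $\mathcal{T}$, the invocation of Proposition~\ref{prop3.3}, and the $\Ext$-vanishing computation for $X$. What your approach buys is that it never changes the base ring---you produce an explicit non-MCM module $Z\in\X$ over the original $R$ with the right local behaviour---whereas the paper trades that explicitness for brevity by passing to a localization (and using that radius does not increase under localization). A minor quibble: your parenthetical ``finite injective dimension passes to kernels of epimorphisms'' is not the relevant direction in showing $\id_R Z<\infty$ from the exact sequence $0\to X^{(a_t)}\to\cdots\to X^{(a_0)}\to Z\to 0$; what you actually use is that finite injective dimension is closed under cokernels of monomorphisms (equivalently, the two-out-of-three property), iterated along the splicing of the resolution.
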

\begin{proof}
Let $\Phi$ be the set of prime ideals $\p$ of $R$ such  that $Y_{\p}$ is not a maximal Cohen--Macaulay $R_{\p}$ -module. Take a minimal element $\p$ of $\Phi$ with respect to the inclusion relation.
By \cite[Lemma 4.8]{stcm}, the subcategory $\add_{R_{\p}}\X_{\p}$ is an $R_{\p}$-resolving subcategory of $\mod R_{\p}$. We have $\supp X = \spec R$ since $\Hom_R(X,X)$ is a nonzero free $R$-module. Therefore we may assume that $Y$ is locally maximal Cohen--Macaulay on the punctured spectrum. Then the modules in $\seek\{X,Y\}$ are locally maximal Cohen--Macaulay on the punctured spectrum and have finite injective dimension. Hence by \ref{lem4.2}, we have $\infty = \radius(\seek\{X,Y\}) \leq \radius \X$.
\end{proof}

\begin{rmk}
Theorem \ref{thm4.1} assumes that $\X$ is $R$-resolving, so that it does not recover Corollary \ref{cor3.2}. Lemma \ref{lem4.2} and the proof of Theorem \ref{thm4.1} are due to Kaito Kimura. The author is indebted to him for his offer to include them in this paper.
\end{rmk}

\begin{ac}
The author would like to thank his supervisor Ryo Takahashi for his careful reading of this manuscript and many thoughtful suggestions. He also thanks Yuya Otake and Kaito Kimura for their valuable comments.
\end{ac}

\end{document}